\documentclass[11pt]{article}
\usepackage[a4paper,margin=1in]{geometry}
\usepackage[T1]{fontenc}
\usepackage[utf8]{inputenc}
\usepackage{lmodern}
\usepackage{amsmath,amssymb,amsthm,mathtools}
\usepackage{microtype}
\usepackage{enumitem}
\usepackage{booktabs}
\usepackage{float}
\usepackage{hyperref}
\usepackage{xcolor}
\usepackage{verbatim}
\hypersetup{colorlinks=true,linkcolor=blue!50!black,citecolor=blue!50!black,urlcolor=blue!50!black}

\newtheorem{theorem}{Theorem}[section]
\newtheorem{proposition}[theorem]{Proposition}
\newtheorem{lemma}[theorem]{Lemma}
\newtheorem{corollary}[theorem]{Corollary}
\newtheorem{conjecture}[theorem]{Conjecture}
\newtheorem{problem}[theorem]{Problem}

\theoremstyle{definition}
\newtheorem{definition}[theorem]{Definition}

\newtheorem{remark}[theorem]{Remark}

\newcommand{\BWT}{\operatorname{BWT}}
\newcommand{\runs}{\rho}
\newcommand{\cruns}{\rho_{\circ}}

\newcommand{\Z}{\mathbb Z}

\newcommand{\sort}{\operatorname{sort}}
\newcommand{\dH}{d_{\mathrm H}}
\newcommand{\Parikh}{\operatorname{Parikh}}

\newcommand{\Cyc}{\operatorname{Cyc}}

\newcommand{\GRR}{\operatorname{GRR}}

\title{An extremal problem for completely unclustered Burrows--Wheeler images}
\author{David Kumallagov}
\date{\today}

\begin{document}
\maketitle

\begin{abstract}
The Burrows--Wheeler transform is usually viewed as a clustering transform:
it tends to group equal letters into long runs.  We study the opposite
extremal regime, where the BWT output is completely unclustered, that is, has
as many equal-letter runs as positions.  Known results imply, on the one hand,
that the number of runs in the BWT of a Lyndon word can increase by at most a
factor of two, and, on the other hand, that over every alphabet of size at
least three completely unclustered BWT images exist in every length. This leads to the extremal problem lying between these two facts.  For \(k\ge3\), let \(U_k(n)\) be the minimum cyclic run number of a primitive necklace of length \(n\) whose BWT has \(n\) runs.

We prove the universal lower bound \(U_k(n)\ge\lceil n/2\rceil\), reduce the
sharpness problem for one-cycle BWT images \(L\) to the Hamming identity
\[
   \cruns(\BWT^{-1}(L))=\dH(L,\sort(L)),
\]
and develop a natural multiset-of-necklaces relaxation with an explicit
constant-cycle correction.  We compute the small values, including the
exceptional value \(U_k(6)=4\), prove a parity obstruction for the Parikh
vectors of sharp examples, and determine the multiset relaxation exactly.
Finally, for every prime \(p\equiv5\pmod8\) for which \(2\) is a primitive
root modulo \(p\), we prove sharpness in the adjacent lengths \(p-1\) and
\(p\).  Under the corresponding Artin-type infinitude hypothesis, this gives
infinitely many adjacent sharp pairs.
\end{abstract}

\tableofcontents

\section{Introduction and motivation}

Run-length compression of the Burrows--Wheeler transform
(BWT)~\cite{BurrowsWheeler1994} exploits a basic clustering effect: after
sorting contexts, equal letters often become grouped into long runs.  This
paper studies the opposite extremal regime, in which the BWT output is
completely unclustered.

A factor-two theorem of Mantaci et al.~\cite{MRRSV2017} shows that this
opposite behaviour is universally bounded: for every Lyndon word \(w\),
\begin{equation}\label{eq:MRRSV-bound}
  \runs(\BWT(w))\le 2\runs(w).
\end{equation}
Moreover, the factor \(2\) is sharp, and the binary case of maximal
unclustering is governed by primitive-root arithmetic.

A recent theorem of Fici et al.~\cite{FGRS2025} gives a complementary
existence result.  For every integer \(n>0\) and every alphabet size
\(k\ge3\), there exists a primitive necklace \([u]\) of length \(n\) over
\(\Sigma_k\) whose BWT is completely unclustered:
\[
  \runs(\BWT([u]))=n.
\]
Their proof proceeds through generalized de Bruijn words and then uses
insertion/deletion operations preserving the cyclicity of the standard
permutation.  For the generalized de Bruijn viewpoint on BWT and standard
permutations, see also Fici--Gabory~\cite{FiciGabory2025}.

These two results leave a natural joint extremal problem.
Inequality~\eqref{eq:MRRSV-bound} implies that if \(\BWT([u])\) has \(n\)
runs, then the input necklace must have at least \(\lceil n/2\rceil\)
cyclic runs.  The existence theorem of~\cite{FGRS2025} shows that BWT outputs
with \(n\) runs exist in every length over non-binary alphabets.  The missing
question is whether the two extremal behaviours can occur simultaneously:
\[
   \runs(\BWT([u]))=n,
   \qquad
   \cruns([u])\approx n/2.
\]
We investigate this extremal question in the present paper.  

Our main contributions are:
\begin{enumerate}[label=\rm(\roman*)]
\item we introduce \(U_k(n)\) and prove the universal lower bound
      \(U_k(n)\ge\lceil n/2\rceil\);
\item we reduce the one-cycle sharpness problem to an explicit Hamming-distance criterion;
\item we compute the first small values, including the exceptional value
      \(U_k(6)=4\);
\item we construct Artin-type sharp families in the adjacent lengths
      \(p-1\) and \(p\);
\item we prove a parity obstruction for the Parikh vectors of sharp examples;
\item we determine the associated multiset-of-necklaces relaxation exactly.
\end{enumerate}

\section{Preliminaries}

\subsection{Words, necklaces, and runs}

Fix an ordered alphabet
\[
  \Sigma_k=\{0<1<\cdots<k-1\}.
\]
For a word \(w=w_0w_1\cdots w_{n-1}\in\Sigma_k^n\), its ordinary, linear number of equal-letter runs is
\[
  \runs(w)=
  \begin{cases}
    0,& n=0,\\
    1+\#\{0\le i<n-1: w_i\ne w_{i+1}\},& n>0.
  \end{cases}
\]

Throughout the paper, positions in words of length \(n\) are indexed from
\(0\) to \(n-1\).  For a letter \(a\in\Sigma_k\), we write
\[
   |w|_a=\#\{0\le i<n:w_i=a\}
\]
for the number of occurrences of \(a\) in \(w\).  The Parikh vector of \(w\) is
\[
   \Parikh(w)=(|w|_0,\ldots,|w|_{k-1}).
\]

A necklace \([w]\) is a conjugacy class of words under cyclic rotation.  It is primitive, or aperiodic, if it contains \(n\) distinct conjugates.  For a nonconstant necklace we define its cyclic run number by
\[
  \cruns([w])=
  \#\{i\in\Z/n\Z: w_i\ne w_{i+1}\}.
\]
For a constant necklace we set \(\cruns([w])=1\).  Equivalently,
\[
  \cruns([w])=\min\{\runs(v):v\in[w]\}.
\]

\subsection{The BWT of a primitive necklace}

Let \([u]\) be a primitive necklace of length \(n\).  Its BWT matrix is the matrix whose rows are the \(n\) cyclic shifts of \(u\), sorted lexicographically.  The  \(\BWT([u])\) is the last column of this matrix.  The first column is the sorted word
\[
  F=\sort(L),\qquad L=\BWT([u]).
\]

If \(L=L_0L_1\cdots L_{n-1}\), its standard permutation \(\pi_L\) is the stable sorting map from the last column to the first column.  More explicitly, if \(L_i=a\) and this is the \(r\)-th occurrence of \(a\) in \(L\), counted from \(0\), then
\begin{equation}\label{eq:standard-permutation}
  \pi_L(i)=C_a+r,
  \qquad
  C_a=\sum_{b<a}|L|_b.
\end{equation}
Thus \(\pi_L(i)\) is the position in \(F\) corresponding to the occurrence \(L_i\).  In particular,
\begin{equation}\label{eq:L-F-pi}
  L_i=F_{\pi_L(i)}.
\end{equation}
A standard criterion says that a word \(L\in\Sigma_k^n\) is the BWT of an aperiodic necklace if and only if \(\pi_L\) is a single cycle of length \(n\); see, e.g., \cite{GRR2012,FGRS2025,LikhomanovShur2011}.

\begin{definition}
A word \(L\in\Sigma_k^n\) is called completely unclustered if
$\rho(L)=n.$
Equivalently, \(L_i\ne L_{i+1}\) for all \(0\le i<n-1\).
If, in addition, \(\pi_L\) is an \(n\)-cycle, then \(L\) is a completely
unclustered BWT image of one primitive necklace.

\end{definition}

\section{The extremal function \texorpdfstring{$U_k(n)$}{Uk(n)}}

\begin{definition}
For \(k\ge3\) and \(n\ge1\), define
\begin{equation}\label{eq:Ukn-def}
  U_k(n)=\min\left\{\cruns([u]):
  [u]\text{ is a primitive necklace over }\Sigma_k,\ 
  |u|=n,\ 
  \runs(\BWT([u]))=n
  \right\}.
\end{equation}

The function \(U_k(n)\) measures the smallest possible cyclic
run-complexity of an input primitive necklace of length \(n\) whose BWT is
as unclustered as possible. Equivalently, it asks how clustered the input
can still be if the BWT output has the maximum possible number \(n\) of
equal-letter runs. Thus \(U_k(n)=\lceil n/2\rceil\) means that the BWT can
attain the extremal factor-two run amplification.

\end{definition}

For \(k\ge3\), the non-binary existence theorem for completely unclustered
BWT images~\cite{FGRS2025} implies that the set in~\eqref{eq:Ukn-def}
is nonempty for every \(n\).  Thus \(U_k(n)\) is finite for all \(n\) and \(k\ge3\).

\begin{proposition}\label{prop:lower-bound}
For every \(k\ge3\) and every \(n\ge1\),
\begin{equation}\label{eq:lower-bound}
  U_k(n)\ge \left\lceil\frac n2\right\rceil.
\end{equation}

\end{proposition}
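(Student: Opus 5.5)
We derive the bound from the factor-two inequality \eqref{eq:MRRSV-bound} of Mantaci et al., which is stated for Lyndon words. Take a primitive necklace \([u]\) of length \(n\) over \(\Sigma_k\) with \(\runs(\BWT([u]))=n\). We will show \(\cruns([u])\ge\lceil n/2\rceil\). Taking the minimum over all such \([u]\) then gives \eqref{eq:lower-bound}. The set in \eqref{eq:Ukn-def} is nonempty by the existence theorem of~\cite{FGRS2025}, so the minimum is a genuine integer.

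\emph{Step 1: pass to the Lyndon representative.} Since \([u]\) is primitive, it has a unique Lyndon conjugate \(w\). Its BWT is \(\BWT(w)=\BWT([u])\), because the BWT matrix depends only on the set of rotations. Applying \eqref{eq:MRRSV-bound} to \(w\) gives
\[
   n=\runs(\BWT([u]))\le 2\runs(w).
\]

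\emph{Step 2: compare \(\runs(w)\) with \(\cruns([u])\).} This is the only point needing care, since \(\cruns\) is the minimum of \(\runs\) over the class while we need an upper bound on \(\runs(w)\).

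We claim that \(\runs(w)=\cruns([u])\) for the Lyndon word \(w\), whenever \([u]\) is nonconstant. By definition, \(\runs(w)=\cruns([u])+[w_{n-1}=w_0]\), since the linear count omits only the wraparound boundary between positions \(n-1\) and \(0\).

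Suppose \(w_{n-1}=w_0=a\). The Lyndon word begins with its minimal letter, so \(a\) is minimal. Since \(w\) is nonconstant, it has a maximal initial block \(a^s\) with \(s<n\). The rotation starting at the final run of \(a\)'s then begins with \(a^{s+t}\) for some \(t\ge1\). That rotation is strictly smaller than \(w\), which contradicts \(w\) being Lyndon. Hence \(w_{n-1}\ne w_0\), and \(\runs(w)=\cruns([u])\).

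In the constant case, primitivity forces \(n=1\). Then \(\cruns=1=\lceil 1/2\rceil\) directly.

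\emph{Step 3: conclude.} Combining the two steps gives \(n\le 2\cruns([u])\). Since \(\cruns([u])\) is an integer, \(\cruns([u])\ge\lceil n/2\rceil\).

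The main (mild) obstacle is Step 2, the linear versus cyclic run count for the Lyndon word. If one prefers to avoid it, one can note that \eqref{eq:MRRSV-bound} is also frequently stated with \(\runs(w)\) replaced by the cyclic count. Alternatively, one can prove directly that each cyclic run boundary of \(u\) contributes at most two BWT run boundaries. The Lyndon argument above, however, suffices.
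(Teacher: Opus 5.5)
Your proof is correct and follows the paper's argument exactly: pass to the Lyndon representative $w$, apply the factor-two bound of Mantaci et al., and use that the Lyndon cut lies at a cyclic run boundary, so $\runs(w)=\cruns([u])$. Your Step~2 just supplies the short justification that the paper states in one line: the rotation starting at the final run of the minimal letter would otherwise be smaller than $w$.
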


\begin{proof}
Let \(w\) be the Lyndon representative of \([u]\). Since BWT is constant on
conjugacy classes, \(\runs(\BWT(w))=n\). By
\cite[Theorem~3.3]{MRRSV2017}, $\runs(\BWT(w))\le 2\runs(w).$
For the Lyndon representative of a primitive necklace, the Lyndon cut occurs at
a cyclic run boundary: unless \(|w|=1\), the first letter of \(w\) is strictly
smaller than its last letter. Hence $\runs(w)=\cruns([u]).$
Therefore \(n\le 2\cruns([u])\), and the claim follows.
\end{proof}

\begin{remark}
For every \(k\ge3\), any ternary example over \(\{0,1,2\}\) is also an example over \(\Sigma_k\).  Hence
\[
   U_k(n)\le U_3(n)\qquad(k\ge3).
\]
Since the lower bound \eqref{eq:lower-bound} is independent of \(k\), proving a sharp ternary upper bound proves the same formula for all larger alphabets.
\end{remark}

\section{Standard-permutation formulae for input and output runs}

The following elementary lemma is the main bookkeeping tool.  It turns the extremal problem into a problem about the crossing pattern of a permutation with respect to the ordered intervals of the sorted word \(F\).

\begin{lemma}\label{lem:run-formulae}
Let \(L\in\Sigma_k^n\), let \(F=\sort(L)\), and let \(\pi=\pi_L\) be the standard permutation.  Suppose that \(\pi\) is a single cycle, let \([u]=\BWT^{-1}(L)\), and assume \(n>1\) for the input-run formula below.  Then:
\begin{enumerate}[label=\rm(\alph*)]
\item
\[
  \runs(L)=1+\#\{0\le i<n-1:F_{\pi(i)}\ne F_{\pi(i+1)}\}.
\]
In particular, \(L\) is completely unclustered if and only if
\[
  F_{\pi(i)}\ne F_{\pi(i+1)}\qquad(0\le i<n-1).
\]
\item
The cyclic input run number is
\begin{equation}\label{eq:input-crossing-formula}
  \cruns([u])=
  \#\{i\in\{0,\ldots,n-1\}:F_i\ne F_{\pi(i)}\}.
\end{equation}

\end{enumerate}
\end{lemma}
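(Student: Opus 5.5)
Part (a) is immediate from \eqref{eq:L-F-pi}. Substituting \(L_i=F_{\pi(i)}\) and \(L_{i+1}=F_{\pi(i+1)}\) into the definition of \(\runs(L)\) gives the displayed formula. Then \(\runs(L)=n\) holds exactly when all \(n-1\) adjacent pairs differ, which is the stated characterisation. No cycle hypothesis is needed here.

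For part (b) I will use the standard LF-mapping description of the BWT matrix. Let \(M\) be the sorted matrix of the rotations of a representative \(u\), and let \(R_j\) be its \(j\)-th row, so \(F_j\) is the first letter of \(R_j\) and \(L_j\) is its last letter. The key claim is this: if \(R_i\) is the rotation \(v=v_0\cdots v_{n-1}\), then \(R_{\pi(i)}\) is the rotation \(v_{n-1}v_0\cdots v_{n-2}\), obtained by moving the last letter to the front.

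I will prove the claim by the usual stability argument. Among the rows ending in a fixed letter \(a\), the rotations \(av_0\cdots v_{n-2}\) are ordered the same way as the rows themselves, because they all begin with \(a\) and are then compared on the remaining letters. So the \(r\)-th occurrence of \(a\) in \(L\) corresponds to the \(r\)-th row beginning with \(a\), which is row \(C_a+r\). This matches \eqref{eq:standard-permutation}. Primitivity makes all rows distinct, so the correspondence is a bijection. The hypothesis that \(\pi\) is an \(n\)-cycle is exactly what makes all rows rotations of a single necklace \([u]=\BWT^{-1}(L)\).

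Now read off the cyclic structure. Start from row \(i_0\) and iterate \(\pi\). Each step prepends the previous last letter, so the letters \(F_{\pi^t(i_0)}\) spell \(u\) cyclically, in reverse or forward order depending on convention. Concretely, if \(R_i=v_0\cdots v_{n-1}\), then \(F_i=v_0\) and \(F_{\pi(i)}=L_i=v_{n-1}\). These two letters are cyclically adjacent in \(u\): \(v_{n-1}\) immediately precedes \(v_0\).

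As \(i\) ranges over all \(n\) rows, the rotation starting point ranges over all positions of \(u\). Hence the pairs \((F_{\pi(i)},F_i)\) are exactly the \(n\) cyclically adjacent pairs \((u_{j},u_{j+1})\), \(j\in\Z/n\Z\), each counted once. Therefore \(\#\{i:F_i\ne F_{\pi(i)}\}=\#\{j:u_j\ne u_{j+1}\}\), which is \(\cruns([u])\) when \([u]\) is nonconstant.

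Finally, the constant case: with \(n>1\) and \([u]\) primitive, \([u]\) cannot be constant, so the convention \(\cruns=1\) never applies. The main obstacle is the claim that \(\pi\) realises the rotate-right map on rows. It is classical (cf. \cite{GRR2012}), and I would either cite it or give the stability argument above.
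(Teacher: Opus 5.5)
Your proof is correct and takes essentially the paper's route: part (a) is the same substitution \(L_i=F_{\pi(i)}\), and for part (b) the paper cites the standard LF/FL-mapping fact \cite[Proposition~2.1]{MRRSV2017}, which you instead justify directly by the stability argument before pairing each row's first letter \(F_i\) with its cyclic predecessor \(F_{\pi(i)}=L_i\) in \(u\). Your version is more self-contained, and the count does not depend on the LF/FL convention, since \(\#\{i:F_i\ne F_{\pi(i)}\}=\#\{i:F_i\ne F_{\pi^{-1}(i)}\}\).
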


\begin{proof}
Part (a) follows from \(L_i=F_{\pi_L(i)}\). Part~(b) is the standard inverse-BWT formula written in the convention where
\(\pi_L^{-1}\) is the FL-mapping
\cite[Proposition~2.1]{MRRSV2017}.
\end{proof}

\begin{definition}[Crossing number]
Let \(F\) be a sorted word and \(\pi\) a permutation of its positions.  Define
\[
  \operatorname{cr}_F(\pi)=\#\{i:F_i\ne F_{\pi(i)}\}.
\]
By Lemma~\ref{lem:run-formulae}, for a BWT image \(L\) with \(F=\sort(L)\),
\[
  \cruns(\BWT^{-1}(L))=\operatorname{cr}_F(\pi_L).
\]
\end{definition}

\begin{proposition}\label{prop:hamming-reduction}
Let \(L\in\Sigma_k^n\), let \(F=\sort(L)\), and suppose that \(n>1\) and that the standard
permutation \(\pi_L\) is an \(n\)-cycle.  Then
\begin{equation}\label{eq:hamming-reduction}
   \cruns(\BWT^{-1}(L))
   =\dH(L,F),
\end{equation}
where \(\dH\) denotes Hamming distance.  
\end{proposition}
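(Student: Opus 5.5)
The plan is to combine Lemma~\ref{lem:run-formulae}(b) with the identity \eqref{eq:L-F-pi}. By Lemma~\ref{lem:run-formulae}(b), under the hypotheses that \(n>1\) and \(\pi_L\) is an \(n\)-cycle,
\[
  \cruns(\BWT^{-1}(L))=\operatorname{cr}_F(\pi_L)=\#\{i:F_i\ne F_{\pi_L(i)}\}.
\]
By \eqref{eq:L-F-pi}, \(F_{\pi_L(i)}=L_i\) for every position \(i\). Substituting this into the set above turns the condition \(F_i\ne F_{\pi_L(i)}\) into \(F_i\ne L_i\). Hence
\[
  \cruns(\BWT^{-1}(L))=\#\{0\le i<n: L_i\ne F_i\}=\dH(L,F),
\]
which is \eqref{eq:hamming-reduction}.

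The only step that needs care is the convention check inside Lemma~\ref{lem:run-formulae}(b). We must confirm that the crossing count there compares \(F_i\) with \(F_{\pi(i)}\), and not with \(F_{\pi^{-1}(i)}\).

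If we want an independent verification, we can argue directly from the BWT matrix, which has rows \(R_0<\dots<R_{n-1}\). Row \(R_i\) ends with \(L_i\) and begins with \(F_i\). The pair \((L_i,F_i)\) is therefore the cyclic adjacency "last letter of a rotation, followed by its first letter." As \(i\) ranges over all \(n\) rotations of the primitive word \(u\), this adjacency ranges over all \(n\) cyclic adjacent pairs \((u_{j-1},u_j)\), each exactly once, by primitivity. The number of such pairs with unequal letters is exactly \(\cruns([u])\) when \(u\) is nonconstant.

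It remains to rule out the constant case. Since \(n>1\) and \(u\) is primitive, \(u\) is not constant, so the special convention \(\cruns=1\) for constant necklaces never arises.

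This direct argument shows the identity holds without relying on which direction \(\pi_L\) goes, so no real obstacle remains.
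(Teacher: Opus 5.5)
Your main argument is exactly the paper's proof: substitute \(L_i=F_{\pi_L(i)}\) into the crossing count of Lemma~\ref{lem:run-formulae}(b) to get \(\#\{i:F_i\ne L_i\}=\dH(L,F)\). Your supplementary BWT-matrix argument is also correct, and it proves the identity directly without the cited inverse-BWT formula: row \(i\) contributes the cyclic adjacency \((L_i,F_i)=(u_{j-1},u_j)\), and each adjacency occurs exactly once. The convention worry is moot anyway, since \(i\mapsto\pi_L(i)\) maps \(\{i:F_i\ne F_{\pi_L(i)}\}\) bijectively onto \(\{j:F_j\ne F_{\pi_L^{-1}(j)}\}\).
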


\begin{proof}
By the definition of the standard permutation, $L_i=F_{\pi_L(i)}.$ 
Therefore Lemma~\ref{lem:run-formulae} gives
\[
   \cruns(\BWT^{-1}(L))
   =\#\{i:F_i\ne F_{\pi_L(i)}\}
   =\#\{i:F_i\ne L_i\}
   =\dH(F,L).
\]

\end{proof}

\begin{corollary}
\label{cor:hamming-construction-criterion}
Conjecture~\ref{conj:sharp} is equivalent to the following
ternary construction problem: for every \(n\ge7\), construct a word
\(L_n\in\{0,1,2\}^n\) such that 
\begin{enumerate}[label=\rm(\alph*)]
\item \(L_{n,i}\ne L_{n,i+1}\) for all \(0\le i<n-1\);
\item \(\pi_{L_n}\) is an \(n\)-cycle;
\item \(\dH(L_n,\sort(L_n))=\lceil n/2\rceil\).
\end{enumerate}
\end{corollary}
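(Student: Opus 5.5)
The plan is to read Conjecture~\ref{conj:sharp} as the assertion that \(U_k(n)=\lceil n/2\rceil\) for all \(k\ge3\) and all \(n\ge7\); it refers to the lengths \(n\ge7\), which lie beyond the exceptional small value \(U_k(6)=4\). I will first reduce the conjecture to the ternary alphabet. By Proposition~\ref{prop:lower-bound} we always have \(U_k(n)\ge\lceil n/2\rceil\), and by the subsequent remark \(U_k(n)\le U_3(n)\). Hence the conjecture for all \(k\ge3\) is equivalent to the inequality \(U_3(n)\le\lceil n/2\rceil\) for every \(n\ge7\). Conversely, if \(U_k(n)=\lceil n/2\rceil\) holds for \(k=3\), the ternary upper bound holds directly. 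So the conjecture is equivalent to the statement that, for each \(n\ge7\), some primitive ternary necklace \([u]\) of length \(n\) has \(\runs(\BWT([u]))=n\) and \(\cruns([u])=\lceil n/2\rceil\). Achieving \(\cruns\le\lceil n/2\rceil\) forces equality, by the lower bound.

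Next I translate the existence of such a necklace into the three conditions on \(L_n\), using the BWT characterization and Proposition~\ref{prop:hamming-reduction}. Suppose such a \([u]\) exists, and set \(L_n=\BWT([u])\). By the standard criterion quoted in the preliminaries, \(\pi_{L_n}\) is an \(n\)-cycle, which gives (b). The equality \(\runs(L_n)=n\) is exactly (a). Since \(n>1\), Proposition~\ref{prop:hamming-reduction} applies and gives
\[
\dH(L_n,\sort(L_n))=\cruns([u])=\lceil n/2\rceil,
\]
which is (c).

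For the converse, let \(L_n\) satisfy (a)--(c). Condition (b) and the same criterion give a primitive necklace \([u]=\BWT^{-1}(L_n)\) of length \(n\) over \(\{0,1,2\}\subseteq\Sigma_k\). Condition (a) gives \(\runs(\BWT([u]))=n\), so \([u]\) lies in the set defining \(U_k(n)\). Finally, Proposition~\ref{prop:hamming-reduction} together with (c) gives \(\cruns([u])=\lceil n/2\rceil\). Therefore \(U_k(n)\le\lceil n/2\rceil\), and combined with the lower bound this yields equality for every \(k\ge3\).

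No step here is a genuine obstacle; the corollary is bookkeeping. The one point needing care is to check that the hypotheses of Proposition~\ref{prop:hamming-reduction}, namely \(n>1\) and \(\pi_L\) an \(n\)-cycle, are satisfied in both directions, and they are, because \(n\ge7\) and condition (b) is imposed.
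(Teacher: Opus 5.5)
Your proof is correct and follows the paper's implicit argument. The paper states this corollary without a separate proof, as an immediate consequence of Proposition~\ref{prop:hamming-reduction}, the lower bound of Proposition~\ref{prop:lower-bound}, and the remark \(U_k(n)\le U_3(n)\); your two-direction bookkeeping is exactly that, with only trivial points left implicit, such as \(\BWT([u])\) of a ternary necklace being a ternary word and \(\BWT^{-1}(\BWT([u]))=[u]\).
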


\section{The Gessel--Restivo--Reutenauer viewpoint}

The standard-permutation formulation above is a one-cycle fragment of the
classical Gessel--Restivo--Reutenauer bijection~\cite{GRR2012}, which we call
the GRR bijection below.  Adding this viewpoint is useful for two reasons.  First, it embeds the function \(U_k(n)\) into a broader multiset-of-necklaces framework.  Second, it provides an exact Parikh-vector baseline count for possible one-cycle BWT images before imposing the completely-unclustered condition.

\subsection{The GRR multiset associated with a word}

Let \(L\in\Sigma_k^n\), let \(\pi_L\) be its standard permutation in the convention of \eqref{eq:standard-permutation}, and put
\[
   \tau_L=\pi_L^{-1}.
\]
Write the disjoint cycle decomposition of \(\tau_L\).  If
\[
   C=(i_0,i_1,\ldots,i_{m-1})
\]
is a cycle, where \(\tau_L(i_j)=i_{j+1}\) cyclically, associate to it the necklace
\[
   N_C=[L_{i_0}L_{i_1}\cdots L_{i_{m-1}}].
\]
The GRR map sends
\[
   L\longmapsto \Phi(L):=\{N_C:C\in\Cyc(\tau_L)\},
\]
a multiset of primitive necklaces.  With the above convention one uses the inverse standard permutation; using the standard permutation instead gives the same cycle type and an equivalent convention. The GRR theorem states that \(\Phi\) is a bijection from words of length
\(n\) over the ordered alphabet to multisets of primitive necklaces of total
length \(n\), preserving the Parikh vector and the cycle type.  The usual BWT of one primitive necklace is precisely the inverse map restricted to multisets consisting of a single necklace~\cite{GRR2012}.

\begin{definition}[GRR total run number]
For \(L\in\Sigma_k^n\), define
\[
   R_{\GRR}(L)=\sum_{C\in\Cyc(\tau_L)}\cruns(N_C),
   \qquad \tau_L=\pi_L^{-1}.
\]
\end{definition}

\begin{proposition}
\label{prop:grr-hamming-correction}
Let \(L\in\Sigma_k^n\), let \(F=\sort(L)\), and put \(\tau_L=\pi_L^{-1}\).
A cycle \(C\in\Cyc(\tau_L)\) will be called \emph{letter-constant} if all
letters \(L_i\), \(i\in C\), are equal.  Let
\[
   c_0(L)=\#\{C\in\Cyc(\tau_L): C\text{ is letter-constant}\}.
\]
Then
\begin{equation}\label{eq:grr-hamming-correction}
   R_{\GRR}(L)=\dH(L,F)+c_0(L).
\end{equation}
In particular, in the one-cycle nonconstant case the correction term is zero,
and \eqref{eq:grr-hamming-correction} reduces to
\(R_{\GRR}(L)=\dH(L,\sort(L))\).
\end{proposition}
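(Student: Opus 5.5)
We prove \eqref{eq:grr-hamming-correction} cycle by cycle. For each cycle \(C\) of \(\tau_L\), count the boundaries inside \(C\) by comparing consecutive letters along the cycle, and then sum over all cycles.

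Fix a cycle \(C=(i_0,\ldots,i_{m-1})\) with \(\tau_L(i_j)=i_{j+1}\). The necklace \(N_C\) is read as \(L_{i_0}L_{i_1}\cdots L_{i_{m-1}}\). Its cyclic boundaries are the indices \(j\in\Z/m\Z\) with \(L_{i_j}\ne L_{i_{j+1}}\). By the definition of the cyclic run number, \(\cruns(N_C)\) equals the number of these boundaries when \(N_C\) is nonconstant, and equals \(1\) when \(N_C\) is constant. Since \(N_C\) is constant exactly when \(C\) is letter-constant, we get
\[
  \cruns(N_C)=\#\{j\in\Z/m\Z:L_{i_j}\ne L_{i_{j+1}}\}+[C\text{ letter-constant}],
\]
because a letter-constant cycle contributes zero boundaries.

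Next we rewrite the boundary condition in terms of \(F\). By \eqref{eq:L-F-pi}, \(L_i=F_{\pi_L(i)}\) for every \(i\). Taking \(i=i_{j+1}=\tau_L(i_j)=\pi_L^{-1}(i_j)\) gives \(L_{i_{j+1}}=F_{i_j}\). So the condition \(L_{i_j}\ne L_{i_{j+1}}\) is the same as \(L_{i_j}\ne F_{i_j}\), and the boundaries of \(N_C\) correspond exactly to the positions \(i\in C\) with \(L_i\ne F_i\).

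Now sum over all cycles. The cycles partition \(\{0,\ldots,n-1\}\), so the boundary counts add up to \(\#\{i:L_i\ne F_i\}=\dH(L,F)\), and the correction terms add up to \(c_0(L)\). This proves \eqref{eq:grr-hamming-correction}.

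For the one-cycle nonconstant case, the single cycle contains all positions and \(L\) is nonconstant, so that cycle is not letter-constant. Hence \(c_0(L)=0\).

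The main point needing care is the orientation convention. We must check that reading \(N_C\) along \(\tau_L=\pi_L^{-1}\) really pairs each \(L_{i_{j+1}}\) with \(F_{i_j}\), and not with \(F_{i_{j+2}}\) or some other shifted index. The identity \(L_{\pi_L^{-1}(i)}=F_i\), which follows from \eqref{eq:L-F-pi}, settles this. Even under the reversed convention the count would come out the same, because reversing a necklace preserves its number of cyclic boundaries. We must also keep the degenerate constant case separate, because there \(\cruns=1\) and not \(0\).
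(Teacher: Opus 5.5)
Your proof is correct and follows essentially the same route as the paper's: you count colour changes along each cycle of \(\tau_L\), add one for each letter-constant cycle, and then use \(L_{\tau_L(i)}=F_{\pi_L(\tau_L(i))}=F_i\) to identify those colour changes with the mismatches between \(L\) and \(F\). Your direct identity \(L_{\tau_L(i)}=F_i\) is only a slightly more streamlined form of the paper's chain \(L_i\ne L_{\tau_L(i)}\iff F_{\pi_L(i)}\ne F_i\), followed by \(F_{\pi_L(i)}=L_i\).
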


\begin{proof}
For a cycle \(C\) of \(\tau_L\), the cyclic run number of the associated
necklace is the number of colour changes along the cycle, except in the
letter-constant case, where our convention gives one cyclic run rather than
zero.  Hence
\[
   R_{\GRR}(L)=
   \#\{i:L_i\ne L_{\tau_L(i)}\}+c_0(L).
\]
Since \(L_i=F_{\pi_L(i)}\) and \(\tau_L=\pi_L^{-1}\), we have
\[
   L_i\ne L_{\tau_L(i)}
   \Longleftrightarrow
   F_{\pi_L(i)}\ne F_i.
\]
Therefore
\[
   \#\{i:L_i\ne L_{\tau_L(i)}\}
   =\#\{i:F_i\ne F_{\pi_L(i)}\}
   =\#\{i:F_i\ne L_i\}
   =\dH(F,L),
\]
which proves the formula.
\end{proof}

\begin{remark}
The correction term in Proposition~\ref{prop:grr-hamming-correction} is essential.
For instance, for \(L=01\) one has \(\dH(L,\sort(L))=0\), while the GRR multiset
consists of two one-letter necklaces and hence \(R_{\GRR}(L)=2\).  Thus the
GRR relaxation cannot be analysed by Hamming distance alone; constant cycles
must be counted separately.
\end{remark}

This suggests a relaxed version of the extremal function in which the inverse object is allowed to be a multiset of necklaces rather than one necklace.

\begin{definition}[GRR multiset relaxation]
For \(k\ge3\), define
\[
   \widetilde U_k(n)=
   \min\{R_{\GRR}(L):L\in\Sigma_k^n,\ \runs(L)=n\}.
\]
\end{definition}

Since every completely unclustered one-cycle BWT image \(L\) is also an
admissible word in the GRR relaxation, and in the one-cycle case
\(R_{\GRR}(L)=\cruns(\BWT^{-1}(L))\), we immediately have
\[
   \widetilde U_k(n)\le U_k(n).
\]
Thus the original problem is the one-cycle refinement of a natural GRR multiset minimization problem.

We shall determine \(\widetilde U_k(n)\) exactly in
Theorem~\ref{thm:tilde-U-exact}.  The remaining issue is whether a minimizing
GRR multiset can be represented by a single standard cycle.

Proposition~\ref{prop:grr-hamming-correction} shows that this relaxation is still
not merely a Hamming-distance minimization problem.  Any possible gap between
\(\widetilde U_k(n)\) and \(U_k(n)\) must take into account both crossing edges
and letter-constant GRR cycles.  Thus the multiset relaxation is useful as a
structural comparison, but it does not by itself prove or disprove the sharp
one-cycle conjecture.

\subsection{A Parikh-vector baseline count}

Let
\[
   \alpha=(\alpha_0,\ldots,\alpha_{k-1}),\qquad |\alpha|=\sum_a\alpha_a=n,
\]
be a Parikh vector.  Let \(B_k(\alpha)\) denote the number of words \(L\in\Sigma_k^n\) with Parikh vector \(\alpha\) such that \(\pi_L\) is a single cycle.  Equivalently, \(B_k(\alpha)\) counts BWT images of primitive necklaces with Parikh vector \(\alpha\).  This count does not impose the condition \(\runs(L)=n\); it is a baseline count before the unclustered restriction.

\begin{proposition}\label{prop:parikh-baseline}
Let \(\alpha=(\alpha_0,\ldots,\alpha_{k-1})\) be a Parikh vector of total size \(n\).  Then
\begin{equation}\label{eq:parikh-baseline}
   B_k(\alpha)=
   \frac1n
   \sum_{d\mid \gcd(\alpha_0,\ldots,\alpha_{k-1})}
   \mu(d)
   \binom{n/d}{\alpha_0/d,\ldots,\alpha_{k-1}/d},
\end{equation}
where zeros among the \(\alpha_a\)'s are allowed and the divisibility condition means that \(d\) divides every nonzero \(\alpha_a\).
\end{proposition}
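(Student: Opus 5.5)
We will obtain the formula by counting the corresponding necklaces and then applying Möbius inversion. The key input is the standard criterion recalled above: a word \(L\) is the BWT of an aperiodic necklace if and only if \(\pi_L\) is an \(n\)-cycle. Moreover, the GRR bijection (or simply the injectivity of the BWT on primitive necklaces together with the inverse-BWT reconstruction) shows that \(L\mapsto\BWT^{-1}(L)\) is a bijection. It runs from words with Parikh vector \(\alpha\) whose standard permutation is one \(n\)-cycle to primitive necklaces of length \(n\) with the same Parikh vector \(\alpha\). The BWT preserves the Parikh vector, since \(L\) is a permutation of the letters of any representative. Hence \(B_k(\alpha)\) equals the number \(P(\alpha)\) of primitive necklaces with Parikh vector \(\alpha\). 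Only the single-cycle case of the GRR theorem is needed here.

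Next we count \(P(\alpha)\). Let \(W(\alpha)=\binom{n}{\alpha_0,\ldots,\alpha_{k-1}}\) be the number of words with Parikh vector \(\alpha\). Every word \(w\) of length \(n\) is uniquely a power \(v^{n/d}\) of a primitive word \(v\) of length \(d\), with \(n/d\) dividing each \(\alpha_a\). The Parikh vector of \(v\) is then \(\alpha/(n/d)\).

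Conjugacy classes of primitive words of length \(m\) have exactly \(m\) elements. So the number of primitive words with Parikh vector \(\beta\), where \(|\beta|=m\), is \(|\beta|\,P(\beta)\). Writing \(e=n/d\), this gives
\[
  W(\alpha)=\sum_{e\mid g}\frac{n}{e}\,P(\alpha/e),
  \qquad g=\gcd(\alpha_0,\ldots,\alpha_{k-1}),
\]
where the gcd is taken over the nonzero entries.

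Set \(f(e)=\frac{n}{e}P(\alpha/e)\) and \(h(e)=W(\alpha/e)\) for divisors \(e\) of \(g\). The same decomposition applied to \(\alpha/e\) shows \(h(e)=\sum_{e\mid e'\mid g} f(e')\). Möbius inversion on the divisor lattice of \(g\) then yields
\[
  nP(\alpha)=f(1)=\sum_{d\mid g}\mu(d)\,h(d)=\sum_{d\mid g}\mu(d)\binom{n/d}{\alpha_0/d,\ldots,\alpha_{k-1}/d}.
\]
Dividing by \(n\) gives \eqref{eq:parikh-baseline}.

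Zero entries cause no trouble: they contribute \(0/d=0\) and a factor \(0!\) to the multinomial, and they do not constrain \(d\). The degenerate case \(n=1\) is immediate.

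The main point requiring care is the first step, the bijection. We must make sure the standard-criterion reference gives both directions with matching Parikh vectors, and in particular that distinct primitive necklaces have distinct BWTs. This follows because \(\pi_L\) determines the necklace: reading \(F\) along the cycle reconstructs it. The rest is routine Möbius inversion.
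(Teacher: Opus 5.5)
Your proposal is correct and follows the paper's route. Like the paper, you use the one-cycle case of the GRR correspondence (equivalently, the standard BWT-image criterion together with injectivity of the BWT on primitive necklaces) to identify \(B_k(\alpha)\) with the number of primitive necklaces of content \(\alpha\). The one difference is that the paper simply cites the classical fixed-content necklace formula, whereas you derive it, correctly, from the unique decomposition \(w=v^{e}\) into a power of a primitive word and Möbius inversion over the divisors of the gcd of the nonzero entries of \(\alpha\).
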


\begin{proof}
This is the classical fixed-content formula for primitive necklaces, applied
through the one-cycle case of the GRR bijection~\cite{GRR2012}, \cite{RuskeySawada1999}.
\end{proof}

\begin{remark}
Proposition~\ref{prop:parikh-baseline} is not a count of completely unclustered BWT images.  It counts all possible BWT images with a fixed Parikh vector.  The refined numbers relevant to \(U_k(n)\) are
\[
   B_k^{\mathrm{unc}}(\alpha,r)=
   \#\{L:\Parikh(L)=\alpha,
       \ \pi_L\text{ is a single cycle},
       \ \runs(L)=n,
       \ \cruns(\BWT^{-1}(L))=r\}.
\]
Then
\[
   U_k(n)=\min\{r:\exists\alpha\text{ with }B_k^{\mathrm{unc}}(\alpha,r)>0\}.
\]
Thus the sharpness conjecture is a run-refined and unclustered refinement of the GRR enumeration.
\end{remark}

\subsection{A block-coloured cycle model}

The GRR viewpoint also gives a useful permutation-only model.  Fix a Parikh vector \(\alpha\) of total size \(n\), and partition \(\{0,\ldots,n-1\}\) into consecutive blocks
\[
   I_0,I_1,\ldots,I_{k-1},
   \qquad |I_a|=\alpha_a.
\]
Let
\[
   b_\alpha(r)=a\quad\text{if }r\in I_a.
\]
A permutation \(\tau\in S_n\) is called \(\alpha\)-ascending if it is increasing on each block \(I_a\): whenever \(r<s\) and \(r,s\in I_a\), one has \(\tau(r)<\tau(s)\).

\begin{proposition}\label{prop:block-cycle-model}
Fix \(\alpha\) and define \(b_\alpha\) as above.  Words \(L\in\Sigma_k^n\) with Parikh vector \(\alpha\) are in bijection with \(\alpha\)-ascending permutations \(\tau\in S_n\), by the rule
\[
   L_i=b_\alpha(\tau^{-1}(i)).
\]
Under this bijection:
\begin{enumerate}[label=\rm(\alph*)]
\item \(L\) is the BWT image of one primitive necklace if and only if \(\tau\) is an \(n\)-cycle;
\item \(L\) is completely unclustered if and only if
\[
   b_\alpha(\tau^{-1}(i))\ne b_\alpha(\tau^{-1}(i+1))
   \qquad(0\le i<n-1);
\]
\item if \(\tau\) is an \(n\)-cycle, then
\[
   \cruns(\BWT^{-1}(L))=
   \#\{r\in\{0,\ldots,n-1\}:b_\alpha(r)\ne b_\alpha(\tau(r))\}.
\]
\end{enumerate}
Consequently, \(U_k(n)\) is the minimum of the crossing number in item~\textup{(c)} over all Parikh vectors \(\alpha\) with at most \(k\) parts and all \(\alpha\)-ascending \(n\)-cycles \(\tau\) satisfying the no-adjacent-equal condition in item~\textup{(b)}.
\end{proposition}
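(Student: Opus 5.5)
The plan is to identify the bijection with the standard permutation: the \(\alpha\)-ascending permutation attached to \(L\) will be \(\tau=\pi_L\) itself. Once this identification is made, items (a)--(c) follow directly from the criterion of Section~2 and from Lemma~\ref{lem:run-formulae}.

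\textbf{The bijection.} Given \(L\) with Parikh vector \(\alpha\), the blocks \(I_a\) are exactly the position sets of the letter \(a\) in \(F=\sort(L)\). So \(b_\alpha(r)=F_r\), and \eqref{eq:standard-permutation} shows that \(\pi_L\) maps the occurrences of \(a\) in \(L\), in increasing order, onto \(I_a\), again in increasing order. Hence \(\pi_L^{-1}\) is increasing on each block, so \(\tau:=\pi_L\) is \(\alpha\)-ascending once I fix the paper's convention. I must check which of \(\pi_L\) and \(\pi_L^{-1}\) is ascending on blocks, since the rule \(L_i=b_\alpha(\tau^{-1}(i))\) pins this down. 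With \(\tau=\pi_L^{-1}\), the rule reads \(L_i=F_{\pi_L(i)}\), which is \eqref{eq:L-F-pi}, and \(\tau\) is increasing on each \(I_a\). So I will take \(\tau=\pi_L^{-1}\), matching the GRR convention \(\tau_L=\pi_L^{-1}\).

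For the inverse map, I start from an arbitrary \(\alpha\)-ascending \(\tau\) and define \(L_i=b_\alpha(\tau^{-1}(i))\). This \(L\) has Parikh vector \(\alpha\), because \(\tau^{-1}\) is a bijection and \(|I_a|=\alpha_a\). Its standard permutation is \(\pi_L=\tau^{-1}\): the occurrences of \(a\) in \(L\) are the positions \(\tau(I_a)\), listed increasingly as \(\tau(C_a),\tau(C_a+1),\dots\) by the ascending property. Thus the \(r\)-th occurrence is sent to \(C_a+r\), as \eqref{eq:standard-permutation} requires. The two maps are therefore mutually inverse. Uniqueness holds because a word determines \(\pi_L\) uniquely, and \(\tau\) determines \(L\).

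\textbf{Items (a)--(c).} Since \(\tau=\pi_L^{-1}\), \(\tau\) is an \(n\)-cycle iff \(\pi_L\) is. Part (a) is then the standard one-cycle criterion cited in Section~2. Part (b) is just \(L_i=b_\alpha(\tau^{-1}(i))\) substituted into \(\runs(L)=n\), or equivalently Lemma~\ref{lem:run-formulae}(a). For part (c), Lemma~\ref{lem:run-formulae}(b) gives \(\#\{i:F_i\ne F_{\pi(i)}\}\). Reindexing by \(r=\pi(i)\), so that \(i=\tau(r)\), this becomes \(\#\{r:b_\alpha(r)\ne b_\alpha(\tau(r))\}\). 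For \(n=1\) both sides equal \(0\) or \(1\) by convention; the lemma assumes \(n>1\), so the trivial case needs a separate remark, since a constant necklace has \(\cruns=1\).

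\textbf{The consequence and the main obstacle.} The final claim follows by taking the minimum over \(\alpha\) with at most \(k\) nonzero parts, since every admissible \(L\) arises this way for its own Parikh vector. I expect the only delicate point to be convention bookkeeping: whether \(\pi_L\) or its inverse is the ascending permutation, and the direction of the reindexing in (c). I would verify this on the small example \(L=10\), where \(\pi_L\) is the transposition.
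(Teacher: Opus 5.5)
Your proposal is correct and is essentially the paper's argument: once you discard the initial slip $\tau:=\pi_L$ and take $\tau=\pi_L^{-1}$, stable sorting is exactly the block-ascending condition, and (a)--(c) follow from the one-cycle criterion and the Hamming/crossing formula; your explicit inverse map and the reindexing $r=\pi_L(i)$ just fill in what the paper leaves implicit. State the $n=1$ caveat sharply rather than leaving it open: there $\tau$ is the identity, so the right-hand side of (c) is $0$ while $\cruns=1$, and hence (c) and the final minimum formula really need $n\ge 2$, a restriction the paper's proof also tacitly inherits from Proposition~\ref{prop:hamming-reduction}.
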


\begin{proof}
The stable sorting condition is exactly the assertion that \(\tau\) is
increasing on each block. The formula \(L_i=b_\alpha(\tau^{-1}(i))\) follows
by construction. The one-cycle condition is the standard BWT image criterion,
and the crossing formula is Proposition~\ref{prop:hamming-reduction} rewritten
in block colours.
\end{proof}

Proposition~\ref{prop:block-cycle-model} is often the cleanest formulation of the exact sharpness problem.  Conjecture~\ref{conj:sharp} asks whether, for every sufficiently large \(n\), one can choose \(\alpha\) and an \(\alpha\)-ascending \(n\)-cycle \(\tau\) satisfying the no-adjacent-equal output condition and having exactly \(\lceil n/2\rceil\) colour-crossing cycle edges.

\section{Small values and computational evidence}

The lower bound of Proposition~\ref{prop:lower-bound} is sharp in many small cases.  The following table lists completely unclustered BWT images \(L\) over the ternary alphabet and one inverse necklace representative \(u\).  The examples prove the corresponding upper bounds for every alphabet size \(k\ge3\).

\begin{table}[H]
\centering
\begin{tabular}{c c c c c}
\hline
\(n\) & \(L\) & one inverse \(u\) & \(\cruns([u])\) & \(\lceil n/2\rceil\)\\
\hline
4  & \texttt{1010}         & \texttt{0011}         & 2 & 2\\
5  & \texttt{10120}        & \texttt{00211}        & 3 & 3\\
6  & \texttt{101201}       & \texttt{001211}       & 4 & 3\\
7  & \texttt{1012021}      & \texttt{0012211}      & 4 & 4\\
8  & \texttt{10120121}     & \texttt{00112211}     & 4 & 4\\
9  & \texttt{101012020}    & \texttt{000221011}    & 5 & 5\\
10 & \texttt{1010121020}   & \texttt{0002210111}   & 5 & 5\\
11 & \texttt{10201021201}  & \texttt{00021122011}  & 6 & 6\\
12 & \texttt{101010101010} & \texttt{000100111011} & 6 & 6\\
\hline
\end{tabular}
\caption{Small completely unclustered BWT images. The inverse representative
\(u\) is displayed up to cyclic rotation.}
\label{tab:small-values}
\end{table}

The length \(6\) example shows that \(U_k(6)\le4\), while the general lower bound only gives \(U_k(6)\ge3\).  A finite enumeration shows that the value \(3\) is impossible.

\begin{proposition}\label{prop:small-values}
For every \(k\ge3\),
\[
\begin{gathered}
  U_k(1)=1,\quad U_k(2)=2,\quad U_k(3)=3,\quad U_k(4)=2,\\
  U_k(5)=3,\quad U_k(6)=4,\quad U_k(7)=4,\quad U_k(8)=4.
\end{gathered}
\]
Moreover, Table~\ref{tab:small-values} gives sharp examples for \(n=9,10,11,12\), so
\[
  U_k(n)=\left\lceil\frac n2\right\rceil
  \qquad(9\le n\le12,\, k\ge3).
\]
\end{proposition}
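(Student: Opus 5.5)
The plan has two halves: upper bounds from explicit witnesses, and matching lower bounds from Proposition~\ref{prop:lower-bound} or, where that bound is not attained, from a finite case analysis. Proposition~\ref{prop:hamming-reduction} turns each witness check into a finite computation. For a listed $L$ we compute $\pi_L$ from \eqref{eq:standard-permutation}, confirm that it is an $n$-cycle, and confirm that $L$ has no equal adjacent letters. Then $\cruns(\BWT^{-1}(L))=\dH(L,\sort(L))$. For example, for $L=\texttt{101201}$ we have $\sort(L)=\texttt{001112}$, and the Hamming distance is $4$.

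For $n=4,5,7,8$ and $9\le n\le 12$ the table values equal $\lceil n/2\rceil$, so Proposition~\ref{prop:lower-bound} already gives equality once each row is verified. This verification is a routine cycle check of $\pi_L$ plus the Hamming count. The remaining lengths are handled directly.
\begin{enumerate}[label=\rm(\roman*)]
\item For $n=1$, any necklace gives $U_k(1)=1$ by the constant convention.
\item For $n=2$, the only primitive necklace up to renaming letters is $[ab]$ with $a<b$. Its BWT is $ba$, which has $2$ runs, and $\cruns=2$.
\item For $n=3$, a necklace with $\cruns=2$ has the form $[a^ib^j]$. We plan to show its BWT always has a repeated adjacent letter. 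For $[aab]$ the sorted rotations are $aab,aba,baa$, so the BWT is $baa$, with $2$ runs. For $[abb]$ the BWT is $bba$, also $2$ runs. Hence $U_k(3)\ge 3$. The necklace $[012]$ has BWT $201$, with $3$ runs, so $U_k(3)=3$.
\end{enumerate}

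The main obstacle is showing $U_k(6)\ne 3$. Proposition~\ref{prop:block-cycle-model} reduces this to a finite enumeration, and we plan to organise it by hand rather than rely on brute force. Suppose $\cruns=3$ with $n=6$. The necklace then has exactly three cyclic runs, so its letters fall into three maximal blocks with three distinct letters at the boundaries. Two cases arise: the necklace is $[a^ib^jc^l]$ with $a<b<c$ in some cyclic order, or a letter is repeated across blocks. Three cyclic runs with only two letters is impossible, since runs alternate letters and their number would be even. So the three runs carry three distinct letters, and we may take them to be $0,1,2$, giving $[0^i1^j2^l]$ or $[0^i2^j1^l]$ with $i+j+l=6$.

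Now apply the Hamming criterion. In $F=0^i1^j2^l$, each letter class must be interleaved in $L$ with no two equal letters adjacent, so every letter class has size at most $3$. The equality $\dH(L,F)=3$ means exactly three positions of $L$ differ from $F$. That is very restrictive: $L$ must agree with $F$ inside blocks, yet $F$ has adjacent equal letters whenever a block has size at least $2$. Each block of size $m$ forces at least $\lfloor m/2\rfloor$ mismatches within it. Summing over blocks, and noting that permutations with few mismatches rarely form a $6$-cycle, we enumerate the few Parikh vectors $(i,j,l)$ with entries at most $3$. There are only a handful, such as $(2,2,2)$, $(1,2,3)$, $(3,3,0)$ and permutations of these; two-letter vectors are excluded because $\cruns=3$ is odd. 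For each vector we list the unclustered $L$ with $\dH=3$ and check that $\pi_L$ fails to be a $6$-cycle. This step is the only real casework and is where errors are most likely, so we cross-check it by computer enumeration over $\{0,1,2\}^6$. Together with the witness $\texttt{101201}$, this gives $U_k(6)=4$ for all $k\ge3$.
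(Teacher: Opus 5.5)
Your proposal is correct and follows the paper's proof. The ingredients are the same: table witnesses checked via Proposition~\ref{prop:hamming-reduction} together with the factor-two bound of Proposition~\ref{prop:lower-bound}, direct arguments for $n\le 3$, and a finite enumeration to rule out $\cruns=3$ in length $6$. For $n=3$ you argue on the input side, while the paper argues on the output side via the fixed point of the standard permutation of $aba$.

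Your reduction to three-letter necklaces $[0^i1^j2^l]$, $[0^i2^j1^l]$ only prunes the paper's enumeration. If you add the parity obstruction of Proposition~\ref{prop:sharp-parikh-parity}, which is proved independently, only $[001122]$ and $[002211]$ remain. Their BWTs \texttt{200121} and \texttt{101220} are visibly clustered, so the case closes by hand.
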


\begin{proof}
For \(n=1,2,3\) the assertion is checked directly.  In length \(2\), a primitive necklace must use two distinct letters, so its cyclic run number is \(2\).  In length \(3\), a completely unclustered binary word has the form \(aba\), but its standard permutation has a fixed point and hence is not a BWT image of an aperiodic necklace.  Therefore a completely unclustered BWT image of length \(3\) must use three distinct letters, and the inverse necklace has three cyclic runs.

For \(4\le n\le8\), the lower bound from Proposition~\ref{prop:lower-bound} gives the sharp value except for \(n=6\).  The table gives examples attaining the lower bound for \(n=4,5,7,8\), and an example with \(4\) input runs for \(n=6\).  The impossibility of \(\cruns=3\) in length \(6\) was verified by exhaustive enumeration of all ordered equality patterns of length \(6\).  The verification does not depend on the alphabet size: every word over any ordered alphabet of length \(6\) is order-isomorphic to one of these finitely many patterns.  The verification script is included in Appendix~\ref{app:code}.

For \(n=9,10,11,12\), the displayed examples attain the universal lower bound.
\end{proof}

\section{Main conjecture}

\begin{conjecture}\label{conj:sharp}
For every \(k\ge3\),
\[
  U_k(n)=\left\lceil\frac n2\right\rceil
\]
for all \(n\ge7\).  Together with the small values in Proposition~\ref{prop:small-values}, this would give the complete list
\[
  U_k(1)=1,
  \quad U_k(2)=2,
  \quad U_k(3)=3,
  \quad U_k(4)=2,
  \quad U_k(5)=3,
  \quad U_k(6)=4,
\]
and
\[
  U_k(n)=\left\lceil\frac n2\right\rceil\qquad(n\ge7).
\]
\end{conjecture}

Equivalently, over any non-binary alphabet the BWT should be able to be
simultaneously maximally unclustered and maximally run-amplifying, up to the
universal factor-two lower bound.

By Corollary~\ref{cor:hamming-construction-criterion}, this is equivalent
to the ternary Hamming construction problem stated above.

\section{Comparison with the binary case and Artin-type families}

In the binary case, complete unclusteredness forces \(L\) to be alternating,
and the standard/FL mapping becomes conjugate, depending on convention, to
multiplication by \(2^{-1}\) or by \(2\) modulo an odd integer.

\begin{proposition}
\label{prop:artin-sharp-families}
Let \(p=2m+1\) be a prime with \(m\) even, and assume that \(2\) is a primitive
root modulo \(p\).  Then, for every \(k\ge3\),
\[
   U_k(p-1)=m=\frac{p-1}{2}
\]
and
\[
   U_k(p)=m+1=\frac{p+1}{2}.
\]
More explicitly, these values are realized by the BWT images
\[
   L^-=(10)^m
\]
of length \(p-1\), and
\[
   L^+=(10)^{m-1}120
\]
of length \(p\).
\end{proposition}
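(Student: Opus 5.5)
The plan is to combine the universal lower bound with an explicit verification of the two displayed words. Proposition~\ref{prop:lower-bound} already gives \(U_k(p-1)\ge\lceil (2m)/2\rceil=m\) and \(U_k(p)\ge\lceil(2m+1)/2\rceil=m+1\). It therefore suffices to check three things for each of \(L^-\) and \(L^+\): that it is completely unclustered, that its standard permutation is a single cycle, and that its Hamming distance to the sorted word is \(m\), respectively \(m+1\). Proposition~\ref{prop:hamming-reduction} then turns the Hamming distance into the cyclic run number of the inverse necklace. These are ternary (indeed partly binary) words, so the remark on \(U_k(n)\le U_3(n)\) handles every \(k\ge3\). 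Complete unclusteredness is visible directly from the words.

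\emph{The length \(p-1\).} For \(L^-=(10)^m\) we have \(F=0^m1^m\). By \eqref{eq:standard-permutation}, \(\pi(2r)=m+r\) and \(\pi(2r+1)=r\) for \(0\le r<m\). The key step is to shift indices by one: put \(j=i+1\in\{1,\dots,2m\}=(\Z/p\Z)^{*}\). Using \(2^{-1}\equiv m+1\pmod p\), a direct computation shows \(\pi(i)+1\equiv (m+1)(i+1)\pmod p\) in both parity cases. For even \(i\), \((i+1)(m+1)\equiv i/2+m+1\), and for odd \(i\), \((i+1)(m+1)\equiv (i+1)/2\). So \(\pi\) is conjugate to multiplication by \(2^{-1}\) on \((\Z/p\Z)^*\). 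This is a single \((p-1)\)-cycle exactly because \(2\), and hence \(2^{-1}\), is a primitive root. For the Hamming count, the first half of \(F\) consists of \(0\)'s, and \(L^-\) has \(1\)'s at the \(m/2\) even positions \(<m\); here the evenness of \(m\) is used. Symmetrically, the second half of \(F\) consists of \(1\)'s, and \(L^-\) has \(0\)'s at \(m/2\) odd positions. The total is \(m\).

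\emph{The length \(p\).} For \(L^+=(10)^{m-1}120\), the Parikh vector is \((m,m,1)\) and \(F=0^m1^m2\). Computing \(\pi^+\) from \eqref{eq:standard-permutation} gives the following:
\begin{itemize}
\item the \(1\)'s still sit at the even positions \(0,\dots,2m-2\), so \(\pi^+(2r)=m+r\);
\item the \(0\)'s at the odd positions \(<2m-1\) keep \(\pi^+(2r+1)=r\);
\item the new entries are \(\pi^+(2m-1)=2m\) (the letter \(2\)) and \(\pi^+(2m)=m-1\) (the last \(0\)).
\end{itemize}
Thus \(\pi^+\) agrees with \(\pi^-\) except that the arrow \(2m-1\mapsto m-1\) is replaced by \(2m-1\mapsto 2m\mapsto m-1\). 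This inserts the new point into the existing \((p-1)\)-cycle, so \(\pi^+\) is a \(p\)-cycle. For the Hamming distance, the first block again contributes \(m/2\) mismatches. In the \(1\)-block, the odd positions \(m+1,\dots,2m-1\) carry \(0\)'s or the \(2\), giving \(m/2\) mismatches, while position \(2m-2\) carries a \(1\) and matches. The final position has \(F=2\) but \(L=0\), giving one more mismatch. The total is \(m+1\).

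The only real obstacle is the cycle structure in the first case, namely recognising the conjugation \(i\mapsto i+1\) that linearises \(\pi\) into multiplication by \(2^{-1}\) modulo \(p\). Once that is in hand, the second case reduces to the insertion observation, and the remaining steps are bookkeeping that uses \(m\) even. The smallest instance, \(p=5\), \(m=2\), gives \(L^+=10120\), which matches Table~\ref{tab:small-values} and can serve as a sanity check.
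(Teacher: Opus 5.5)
Your proposal is correct and follows essentially the same route as the paper: you take the lower bound from Proposition~\ref{prop:lower-bound}, use the shift $i\mapsto i+1$ to turn $\pi_{L^-}$ into multiplication by $2^{-1}\equiv m+1$ on $(\mathbb{Z}/p\mathbb{Z})^{*}$, splice the new point $2m$ into the arrow $2m-1\mapsto m-1$ for $L^+$, and count mismatches by parity using that $m$ is even. Your direct insertion argument on indices is the paper's conjugated statement $T(-1)=0$, $T(0)=m$ read back through $\theta$.
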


\begin{proof}
The universal lower bound, Proposition~\ref{prop:lower-bound}, gives
\[
   U_k(p-1)\ge m,
   \qquad
   U_k(p)\ge m+1.
\]
It remains to construct completely unclustered BWT images with single-cycle
standard permutation and the indicated Hamming distance to their sorted
rearrangements.

First consider
\[
   L^-=(10)^m,
   \qquad |L^-|=2m=p-1.
\]
It is completely unclustered and
\[
   \sort(L^-)=0^m1^m.
\]
For \(0\le r\le m-1\), the standard permutation satisfies
\[
   \pi_{L^-}(2r)=m+r,
   \qquad
   \pi_{L^-}(2r+1)=r.
\]
Let \(\theta:\{0,\ldots,2m-1\}\to\mathbb F_p^\times\) be
\(\theta(i)=i+1\pmod p\).  Since
\[
   m+1\equiv 2^{-1}\pmod p,
\]
a direct computation gives
\[
   \theta(\pi_{L^-}(i))=(m+1)\theta(i)
   \qquad (0\le i\le 2m-1).
\]
Thus \(\pi_{L^-}\) is conjugate to multiplication by \(2^{-1}\) on
\(\mathbb F_p^\times\).  This is a single cycle precisely because \(2\) is a
primitive root modulo \(p\).  Hence \(L^-\) is the BWT image of one primitive
necklace.  Since \(m\) is even, comparing \((10)^m\) with \(0^m1^m\) gives
exactly \(m\) matches and therefore
\[
   \dH(L^-,\sort(L^-))=m.
\]
By Proposition~\ref{prop:hamming-reduction},
\(\cruns(\BWT^{-1}(L^-))=m\).  Hence \(U_k(p-1)\le m\), and equality follows.

Now consider the ternary word
\[
   L^+=(10)^{m-1}120,
   \qquad |L^+|=2m+1=p.
\]
It is completely unclustered, has Parikh vector \((m,m,1)\), and
\[
   \sort(L^+)=0^m1^m2.
\]
The standard permutation is given by
\[
\begin{aligned}
   \pi_{L^+}(2r)&=m+r &&(0\le r\le m-1),\\
   \pi_{L^+}(2r+1)&=r &&(0\le r\le m-2),\\
   \pi_{L^+}(2m-1)&=2m,\\
   \pi_{L^+}(2m)&=m-1.
\end{aligned}
\]
Let \(\theta:\{0,\ldots,2m\}\to\mathbb F_p\) be
\(\theta(i)=i+1\pmod p\).  Put \(a=m+1\equiv2^{-1}\pmod p\).  Then the
conjugate map \(T=\theta\pi_{L^+}\theta^{-1}\) satisfies
\[
   T(y)=ay \quad\text{for } y\notin\{0,-1\},
\]
while
\[
   T(-1)=0,
   \qquad
   T(0)=m.
\]
Multiplication by \(a=2^{-1}\) is one cycle on \(\mathbb F_p^\times\), since
\(2\) is a primitive root.  In that cycle the edge
\[
   -1\longmapsto a(-1)=m
\]
is replaced by the two edges
$ -1\longmapsto 0\longmapsto m.$
Therefore \(T\), and hence \(\pi_{L^+}\), is a single cycle on all of
\(\mathbb F_p\).  Thus \(L^+\) is the BWT image of one primitive necklace.

Finally, because \(m\) is even, the comparison of
\[
   L^+=(10)^{m-1}120
   \quad\text{with}\quad
   \sort(L^+)=0^m1^m2
\]
has exactly \(m\) matches.  Hence
\[
   \dH(L^+,\sort(L^+))=(2m+1)-m=m+1.
\]
By Proposition~\ref{prop:hamming-reduction}, the inverse necklace has
\(m+1\) cyclic runs.  Thus \(U_k(p)\le m+1\), and the lower bound gives equality.
The same BWT images use at most three letters, so they are valid over every
alphabet \(\Sigma_k\) with \(k\ge3\).
\end{proof}

\begin{corollary}
\label{cor:artin-infinite-family}
Assume the following Artin-type hypothesis in arithmetic progressions: there
exist infinitely many primes \(p\equiv5\pmod8\) for which \(2\) is a primitive
root modulo \(p\).  Then Conjecture~\ref{conj:sharp} holds for infinitely many
pairs of consecutive lengths
\[
   n=p-1,
   \qquad
   n=p.
\]
Equivalently, it holds for infinitely many pairs of consecutive lengths
\[
   p-1\equiv4\pmod8,\qquad p\equiv5\pmod8.
\]
\end{corollary}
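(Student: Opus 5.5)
The corollary should follow directly from Proposition~\ref{prop:artin-sharp-families}. The only task is to translate the hypothesis ``\(p\equiv5\pmod8\) and \(2\) is a primitive root modulo \(p\)'' into that proposition's hypotheses, and then check that the resulting lengths are distinct and large enough to fall under Conjecture~\ref{conj:sharp}.

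First, I will take a prime \(p\equiv5\pmod8\) with \(2\) a primitive root, and write \(p=2m+1\). Then \(2m=p-1\equiv4\pmod8\), so \(m\equiv2\pmod4\); in particular \(m\) is even. The hypotheses of Proposition~\ref{prop:artin-sharp-families} therefore hold, and for every \(k\ge3\) it gives
\[
U_k(p-1)=m=\left\lceil\tfrac{p-1}{2}\right\rceil
\qquad\text{and}\qquad
U_k(p)=m+1=\left\lceil\tfrac{p}{2}\right\rceil .
\]
The second equality holds because \(p\) is odd, and the first because \(p-1=2m\) is even. So both lengths satisfy the formula of Conjecture~\ref{conj:sharp}.

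Second, I will handle the range restriction \(n\ge7\) in the conjecture. The smallest prime \(\equiv5\pmod8\) is \(5\), for which \(2\) is a primitive root. It gives lengths \(4,5\), which are below \(7\); note that \(U_k(4)=2\) and \(U_k(5)=3\) still match \(\lceil n/2\rceil\), as recorded in Proposition~\ref{prop:small-values}. The hypothesis provides infinitely many such primes, so discarding finitely many leaves infinitely many with \(p-1\ge7\). Examples are \(p=13, 29, 37, 53, 61,\dots\), for each of which \(2\) is a primitive root. Distinct primes give distinct pairs \((p-1,p)\), so we obtain infinitely many consecutive pairs. The ``equivalently'' clause is just the congruence bookkeeping above: \(p\equiv5\) forces \(p-1\equiv4\pmod8\).

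I do not expect any real obstacle, since all the substantive work (the one-cycle property via conjugacy to multiplication by \(2^{-1}\), and the Hamming count) is already done in Proposition~\ref{prop:artin-sharp-families}. The only point needing care is the parity of \(m\): the proposition needs \(m\) even so that the match counts \((10)^m\) versus \(0^m1^m\) come out to exactly \(m\). That is exactly why the residue class is \(5\pmod 8\) rather than merely \(1\pmod 4\). I will make this explicit. I will also remark that primes \(p\equiv1\pmod8\) are excluded anyway, because \(2\) is then a quadratic residue and cannot be a primitive root, so the hypothesis loses nothing within the residue classes where \(m\) is even.
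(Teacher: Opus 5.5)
Your proposal is correct and follows the paper's proof: write $p=2m+1$, observe that $p\equiv5\pmod 8$ forces $m$ to be even, apply Proposition~\ref{prop:artin-sharp-families} to get $U_k(p-1)=\lceil (p-1)/2\rceil$ and $U_k(p)=\lceil p/2\rceil$, and then invoke the infinitude hypothesis. You also discard the finitely many primes with $p-1<7$ (only $p=5$) so that both lengths fall in the range $n\ge7$ of Conjecture~\ref{conj:sharp}; the paper leaves this small point implicit.
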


\begin{proof}
If \(p\equiv5\pmod8\), then \(p=2m+1\) with \(m=(p-1)/2\) even.  Applying
Proposition~\ref{prop:artin-sharp-families} gives
\[
   U_k(p-1)=\frac{p-1}{2}=\left\lceil\frac{p-1}{2}\right\rceil
\]
and
\[
   U_k(p)=\frac{p+1}{2}=\left\lceil\frac p2\right\rceil.
\]
The assumed infinitude of such primes gives infinitely many such pairs.
\end{proof}

\begin{remark}
The hypothesis used in Corollary~\ref{cor:artin-infinite-family} is a special
case of Artin's primitive root conjecture with prescribed congruence conditions:
for admissible data one expects a positive density of primes
\(p\equiv a\pmod f\) for which a fixed integer \(g\) is a primitive root modulo
\(p\).  See Lenstra~\cite{Lenstra1977} and Moree~\cite{Moree1999,MoreeSurvey}
for this formulation and related density results.  It is not known
unconditionally in the special case used above.
\end{remark}

\section{Parikh obstructions and the GRR relaxation}

The following observation gives a universal Parikh obstruction for sharp
examples.  It will also be used below to evaluate the GRR multiset relaxation.

\begin{lemma}
\label{lem:odd-block-fixed-point}
Let \(L\in\Sigma_k^n\) satisfy \(L_i\ne L_{i+1}\) for all
\(0\le i<n-1\), let \(F=\sort(L)\), and put \(\tau=\pi_L^{-1}\).
Fix a letter \(a\), put \(m_a=|L|_a\), and let $C_a=\sum_{b<a}|L|_b.$
Thus the \(a\)'s occupy the block
\[
   I_a=\{C_a,C_a+1,\ldots,C_a+m_a-1\}
\]
in the sorted word \(F\).  If
\(m_a\) is odd and
\[
   \#\{i\in I_a:L_i=a\}=\frac{m_a+1}{2},
\]
then \(\tau\) has a fixed point \(j\) with \(L_j=a\).  In particular, the GRR
cycle decomposition of \(\tau\) contains a letter-constant cycle.
\end{lemma}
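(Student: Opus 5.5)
The idea is to pin down exactly where the \(a\)'s of \(L\) sit inside the block \(I_a\), and then read off a fixed point of \(\pi_L\) from the formula~\eqref{eq:standard-permutation}. Since \(\tau=\pi_L^{-1}\), a fixed point of \(\pi_L\) is a fixed point of \(\tau\). A fixed point \(j\) with \(L_j=a\) is a one-element cycle of \(\tau\) whose single letter is \(a\), so it is letter-constant in the sense of Proposition~\ref{prop:grr-hamming-correction}.

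\emph{Step 1: the \(a\)'s inside \(I_a\) are forced.} The block \(I_a\) consists of \(m_a\) consecutive positions, and \(L\) has no two adjacent equal letters. So among the positions of \(I_a\), no two consecutive ones both carry \(a\). Hence \(\#\{i\in I_a:L_i=a\}\le (m_a+1)/2\). Since \(m_a\) is odd, equality holds only for the alternating pattern
\[
\{i\in I_a : L_i=a\}=\{C_a,\,C_a+2,\,\ldots,\,C_a+m_a-1\}.
\]
Thus the \(a\)'s in \(I_a\) are exactly the positions \(C_a+2t\) for \(0\le t\le (m_a-1)/2\).

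\emph{Step 2: compute \(\pi_L\) on these positions.} Let \(s\) be the number of occurrences of \(a\) in \(L\) at positions before \(C_a\). Then the occurrence at \(C_a+2t\) is the \((s+t)\)-th occurrence of \(a\), counted from \(0\). By~\eqref{eq:standard-permutation},
\[
\pi_L(C_a+2t)=C_a+s+t .
\]
This is a fixed point exactly when \(t=s\). So it remains to check that \(s\) is an admissible value of \(t\).

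\emph{Step 3: check the range of \(s\).} Counting all \(a\)'s in \(L\) gives
\[
m_a=s+\frac{m_a+1}{2}+(\text{number of $a$'s after the block}),
\]
so \(s\le (m_a-1)/2\). Hence \(t=s\) lies in \(\{0,\ldots,(m_a-1)/2\}\). The position \(j=C_a+2s\) is then a fixed point of \(\pi_L\), and therefore of \(\tau\), with \(L_j=a\). This gives the one-element letter-constant cycle claimed in the statement.

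The only delicate point is the extremal-configuration argument in Step~1, namely that equality forces the alternating pattern starting and ending at the endpoints of \(I_a\). This is the standard fact that an odd-length stretch of \(m\) positions with no two adjacent chosen positions contains at most \((m+1)/2\) chosen ones, with equality only for the pattern occupying both ends. Everything else is direct bookkeeping with~\eqref{eq:standard-permutation}.
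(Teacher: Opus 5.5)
Your proof is correct and follows the paper's argument. The extremal independent-set configuration forces the $a$'s in $I_a$ to sit at $C_a, C_a+2,\ldots,C_a+m_a-1$, and with $s$ the number of $a$'s before the block, the position $C_a+2s$ is a fixed point. The only difference is cosmetic: you compute $\pi_L(C_a+2t)=C_a+s+t$ directly, whereas the paper writes the same identity as $\tau(C_a+t)=P_t$ using the sorted occurrence positions $P_t$.
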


\begin{proof}
Write \(m_a=2s+1\).  Since \(L\) has no equal adjacent letters, the positions
of the letter \(a\) inside the interval \(I_a\) form an independent set in a
path of length \(2s+1\).  If this independent set has maximum size \(s+1\), it
is necessarily
\[
   C_a,\ C_a+2,\ C_a+4,\ldots,\ C_a+2s.
\]
Let
\[
   P_0<P_1<\cdots<P_{2s}
\]
be the positions of all occurrences of \(a\) in \(L\).  By definition of
stable sorting,
\[
   \tau(C_a+t)=P_t\qquad(0\le t\le 2s).
\]
Let \(\ell\) be the number of occurrences of \(a\) lying strictly before the
block \(I_a\).  Since there are only \(s\) occurrences of \(a\) outside
\(I_a\), one has \(0\le \ell\le s\), and
\[
   P_{\ell+j}=C_a+2j\qquad(0\le j\le s).
\]
Taking \(j=\ell\) gives
\[
   P_{2\ell}=C_a+2\ell.
\]
Hence
\[
  \tau(C_a+2\ell)=P_{2\ell}=C_a+2\ell,
\]
which is a fixed point.  Since \(L_{C_a+2\ell}=a\), this fixed point is a letter-constant GRR cycle.
\end{proof}

\begin{proposition}
\label{prop:sharp-parikh-parity}
Let \([u]\) be a primitive necklace of length \(n>1\), put
\(L=\BWT([u])\), and suppose that
\[
   \runs(L)=n,
   \qquad
   \cruns([u])=\left\lceil\frac n2\right\rceil.
\]
Let \(\mathbf m=\Parikh(u)=\Parikh(L)\).  Then:
\begin{enumerate}[label=\rm(\alph*)]
\item if \(n\) is even, every nonzero component of \(\mathbf m\) is even;
\item if \(n\) is odd, exactly one component of \(\mathbf m\) is odd.
\end{enumerate}
\end{proposition}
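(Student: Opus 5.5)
The plan is to count the matches between \(L\) and \(F=\sort(L)\) block by block. By Proposition~\ref{prop:hamming-reduction}, \(\cruns([u])=\dH(L,F)\). Hence the hypothesis says that the number of positions \(i\) with \(L_i=F_i\) equals \(n-\lceil n/2\rceil=\lfloor n/2\rfloor\). Partition the positions into the blocks \(I_a\) of Lemma~\ref{lem:odd-block-fixed-point}, with \(|I_a|=m_a\). A match inside \(I_a\) is a position \(i\in I_a\) with \(L_i=a\), so the number of matches is
\[
\sum_a \#\{i\in I_a: L_i=a\}.
\]
Since \(L\) has no two equal adjacent letters, the positions of \(a\) inside the interval \(I_a\) form an independent set in a path on \(m_a\) vertices. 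Therefore \(\#\{i\in I_a:L_i=a\}\le\lceil m_a/2\rceil\).

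The second step rules out the extremal value \(\lceil m_a/2\rceil\) in odd blocks. If \(m_a\) is odd and the count equals \((m_a+1)/2\), Lemma~\ref{lem:odd-block-fixed-point} gives a fixed point of \(\tau=\pi_L^{-1}\). But \(\pi_L\) is an \(n\)-cycle with \(n>1\), so it has no fixed points. Hence every odd block contributes at most \((m_a-1)/2=\lfloor m_a/2\rfloor\) matches, and every block contributes at most \(\lfloor m_a/2\rfloor\). Summing gives
\[
\lfloor n/2\rfloor=\#\text{matches}\le\sum_a\lfloor m_a/2\rfloor=\frac{n-t}{2},
\]
where \(t\) is the number of odd components of \(\mathbf m\).

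The last step reads off the parity conclusions. When \(n\) is even, the inequality \(n/2\le (n-t)/2\) forces \(t=0\), which is part~(a). When \(n\) is odd, \(t\) must be odd, since \(t\equiv n\pmod 2\). The inequality \((n-1)/2\le (n-t)/2\) gives \(t\le1\), so \(t=1\), which is part~(b).

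The only delicate point is the use of the lemma, which needs \(\pi_L\) to have no fixed point; this follows from the one-cycle hypothesis together with \(n>1\). Everything else is the independent-set bound, which is routine.
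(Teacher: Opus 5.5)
Your proposal is correct and follows the paper's proof essentially step for step. It uses the Hamming reduction to count matches, the per-block independent-set bound, Lemma~\ref{lem:odd-block-fixed-point} to exclude the extremal value in odd blocks, and the count \(\sum_a\lfloor m_a/2\rfloor=(n-t)/2\); your contradiction via ``an \(n\)-cycle with \(n>1\) has no fixed points'' is just a slightly more direct phrasing of the paper's ``no letter-constant cycle can occur.''
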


\begin{proof}
Let \(F=\sort(L)\), and for every letter \(a\) let \(I_a\) be the block of the
\(a\)'s in \(F\).  Put
\[
   s_a=\#\{i\in I_a:L_i=a\}.
\]
By Proposition~\ref{prop:hamming-reduction},
\[
   \dH(L,F)=\cruns([u])=\left\lceil\frac n2\right\rceil.
\]
Equivalently, the number of matches between \(L\) and \(F\) is
\[
   \sum_a s_a=n-\dH(L,F)=\left\lfloor\frac n2\right\rfloor.
\]

Since \(L\) has no equal adjacent letters, in each block \(I_a\) the matching
positions form an independent set.  Thus \(s_a\le \lceil m_a/2\rceil\).  If
\(m_a\) is odd and equality held, Lemma~\ref{lem:odd-block-fixed-point} would
produce a letter-constant GRR cycle.  But in the present one-necklace case the
standard permutation is a single nonconstant cycle; hence no letter-constant
cycle can occur.  Therefore, in fact
\[
   s_a\le \left\lfloor\frac{m_a}{2}\right\rfloor
   \qquad\text{for every }a.
\]
Consequently, if \(x\) denotes the number of odd components of \(\mathbf m\),
then
\[
   \left\lfloor\frac n2\right\rfloor
   =\sum_a s_a
   \le
   \sum_a \left\lfloor\frac{m_a}{2}\right\rfloor
   =\frac{n-x}{2}.
\]
If \(n\) is even, then \(x\) is even, and the inequality forces \(x=0\).  If
\(n\) is odd, then \(x\) is odd, and the inequality forces \(x=1\).  This proves
the claim.
\end{proof}

\begin{remark}
\label{rem:binary-sharp-parikh}
In the binary case, a sharp completely unclustered example of length \(n>1\)
can exist only when
\[
   n=4q,
   \qquad
   \Parikh(u)=(2q,2q).
\]
For this binary Parikh vector, the only possible BWT images are the two
alternating words.  The standard binary calculation identifies the corresponding
FL-mapping with multiplication by \(2\) modulo \(n+1\).  By the known binary classification of completely unclustered BWT images via
the alternating word and the primitive-root condition
\cite{MRRSV2017,FGRS2025},
\[
   n+1\text{ is prime},
   \qquad
   2\text{ is a primitive root modulo }n+1,
   \qquad
   n+1\equiv5\pmod8.
\]
\end{remark}

\begin{remark}
Proposition~\ref{prop:sharp-parikh-parity} is a necessary condition, not a full
classification for alphabets of size at least three.  The remaining obstruction
is genuinely a standard-cycle obstruction: even when the parity condition holds,
one must still construct an unclustered word \(L\) with the prescribed content,
with \(\pi_L\) a single cycle, and with
\(\dH(L,\sort(L))=\lceil n/2\rceil\).  Thus the parity obstruction is the
universal Parikh part of Problem~\ref{prob:parikh-constraints}, while the residual part is the sharp
BWT-image construction problem with fixed content.
\end{remark}

The unrestricted one-cycle baseline for a fixed Parikh vector is already given
by Proposition~\ref{prop:parikh-baseline}.  The sharp Parikh-constrained problem is
therefore a refinement of the GRR enumeration by two extra conditions: the
no-adjacent-equal condition on \(L\), and the Hamming condition
\(\dH(L,\sort(L))=\lceil n/2\rceil\).  Proposition~\ref{prop:block-cycle-model}
reduces this refinement to a finite problem about \(\alpha\)-ascending coloured
cycles with a prescribed number of colour-crossing edges.

\begin{theorem}
\label{thm:tilde-U-exact}
For every \(k\ge3\),
\[
\widetilde U_k(n)=
\begin{cases}
1,& n=1,\\
2,& n=2,\\
3,& n=3,\\
2,& n=4,\\
3,& n=5,\\
4,& n=6,\\
\left\lceil n/2\right\rceil,& n\ge7.
\end{cases}
\]
\end{theorem}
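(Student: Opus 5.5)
The plan is to establish the lower bound $\widetilde U_k(n)\ge\lceil n/2\rceil$ for every $n$ by a counting argument that strengthens Proposition~\ref{prop:sharp-parikh-parity} to the multiset setting. Then I would match it with explicit words for $n\ge7$, and treat $n\le6$ separately using the small-value analysis.

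\emph{Lower bound.} Take an admissible $L$ with $\runs(L)=n$, put $F=\sort(L)$, $m_a=|L|_a$, and $s_a=\#\{i\in I_a:L_i=a\}$. By Proposition~\ref{prop:grr-hamming-correction},
\[
R_{\GRR}(L)=n-\sum_a s_a+c_0(L).
\]
The steps are as follows.
\begin{enumerate}[label=\rm(\roman*)]
\item Since $L$ has no equal adjacent letters, the matches in each block $I_a$ form an independent set in a path. Hence $s_a\le\lceil m_a/2\rceil$.
\item Let $x$ be the number of odd $m_a$, and let $y$ be the number of letters $a$ with $m_a$ odd and $s_a=(m_a+1)/2$. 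Then $\sum_a s_a\le (n-x)/2+y$.
\item By Lemma~\ref{lem:odd-block-fixed-point}, each of these $y$ letters yields a fixed point of $\tau_L$ carrying that letter. These fixed points are distinct letter-constant cycles, so $c_0(L)\ge y$.
\item Combining gives $R_{\GRR}(L)\ge n-(n-x)/2=(n+x)/2$.
\item Since $x\equiv n\pmod 2$, this is at least $\lceil n/2\rceil$.
\end{enumerate}
The larger values at $n=3$ and $n=6$ need separate justification. For $n=3$, a direct check of the patterns $aba$ and $abc$ gives $R_{\GRR}=3$; for example $010$ has $\dH=2$ plus one constant fixed point. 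For $n=6$, I would rerun the exhaustive enumeration of Appendix~\ref{app:code} over all order-patterns of length $6$, now computing $R_{\GRR}$ including multi-cycle words. The remaining values for $n\le 5$ follow from the bound together with Table~\ref{tab:small-values} and the inequality $\widetilde U_k(n)\le U_k(n)$.

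\emph{Upper bound for $n\ge7$.} Here the one-cycle condition is no longer needed; it suffices to have $\dH(L,F)=\lceil n/2\rceil$ and $c_0(L)=0$. The tool is a monotonicity criterion. A letter-constant cycle of letter $a$ is a $\tau_L$-invariant subset of $I_a$. Since $\tau_L$ is increasing on $I_a$, it suffices that $\tau_L(r)-r$ has constant nonzero sign on the relevant part of $I_a$.

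For $n=2q$ with $q$ even, take $L=(10)^q$. Then $\tau(t)=2t+1>t$ on $I_0$ and $\tau(q+t)=2t<q+t$ on $I_1$, so there are no cycles inside either block and $c_0=0$. Moreover $\dH=q$ because $q$ is even. This handles $n\equiv0\pmod4$.

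For $n\equiv1\pmod4$, take $L=(10)^{m-1}120$ with $m$ even. The same monotonicity check applies, with only the single position of $2$ to inspect, and the Hamming count is already done in Proposition~\ref{prop:artin-sharp-families}. Note that this needs no primality assumption here.

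\emph{Expected main obstacle.} The residues $n\equiv2,3\pmod4$ are where I expect the real difficulty. By the parity argument above, such a word must either have every $m_a$ even (when $n$ is even) or exactly one odd $m_a$ (when $n$ is odd), while still reaching $s_a=\lfloor m_a/2\rfloor$ in every block. For $n\equiv 2\pmod 4$ a binary alternating word fails, so I would try ternary words with Parikh vector $(2i,2j,2l)$ built from alternating segments. One candidate is $(10)^{j}(20)^{l}\cdots$-type concatenations, modelled on the $n=10$ row of Table~\ref{tab:small-values}, which I would generalise periodically, together with an analogous odd-length modification modelled on the $n=7$ and $n=11$ rows. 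For each family the plan is to verify the no-constant-cycle condition through the same sign argument on $\tau_L(r)-r$ block by block, and to verify the match count directly. Getting a single clean pattern that keeps $\tau_L$ free of self-contained cycles in every block for these two residues is the step I would attack first and expect to need the most care.
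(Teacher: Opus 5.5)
The gap in your proposal is that you give no construction for \(n\ge7\) with \(n\equiv2,3\pmod4\).

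\textbf{What is correct.} Your lower bound, steps (i)--(v), is the paper's argument. Your word \((10)^{m-1}120\) for \(n\equiv1\pmod4\) is a valid alternative to the paper's \((01)^{2q}0\). It has \(\dH=m+1\) and \(c_0=0\) for every even \(m\), with no primality needed. The paper's word instead has \(\dH=2q\) plus exactly one forced constant fixed point.

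\textbf{The gap.} For \(n\equiv2,3\pmod4\) you only describe a search direction ("the step I would attack first"). That is half of the upper bound for \(n\ge7\), and it cannot be imported from \(\widetilde U_k(n)\le U_k(n)\). Sharp one-cycle examples are known only in isolated lengths: \(n=10,11\) from Table~\ref{tab:small-values}, and the Artin lengths, which are \(\equiv0,1\pmod4\). Sharpness of \(U_k(n)\) in general is exactly Conjecture~\ref{conj:sharp}.

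\textbf{How the paper closes it.} It uses two explicit families.
\begin{itemize}
\item For \(n=4q+2\) with \(q\ge2\), take \(L=(10)^q12(10)^{q-1}20\). Its content is \((2q,2q,2)\) and \(F=0^{2q}1^{2q}22\).
\begin{itemize}
\item The three blocks contain \(q\), \(q\) and \(1\) matches, so \(\dH=2q+1\).
\item On \(I_0\) one has \(\tau(r)>r\). On \(I_1\) one has \(\tau(2q+r)=2r<2q+r\). On \(I_2\) one has \(\tau(4q)=2q+1\notin I_2\).
\item So your sign criterion gives \(c_0=0\).
\end{itemize}
\item For \(n=4q+3\) with \(q\ge1\), take \(L=1012021(2121)^{q-1}\). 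Its content is \((2,2q+1,2q)\).
\begin{itemize}
\item There are \(1+q+q\) matches, so \(\dH=2q+2\).
\item On \(I_0\), \(\tau(0)=1\) and \(\tau(1)=4\notin I_0\). On \(I_2\), \(\tau(2q+3+r)=2r+3<2q+3+r\).
\item Your constant-sign criterion fails on \(I_1=\{2,\dots,2q+2\}\). There \(\tau(2)=0\) and \(\tau(3)=2\), but \(\tau(2+r)=2r+2>2+r\) for \(r\ge2\).
\item You need a small refinement: the only descents in \(I_1\) run \(3\mapsto2\mapsto0\), and this exits the block. Hence no \(\tau\)-cycle lies inside \(I_1\), and \(c_0=0\).
\end{itemize}
\end{itemize}

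\textbf{A minor slip.} You name only \(n=3\) and \(n=6\) as exceeding \(\lceil n/2\rceil\), but \(n=2\) does too, since \(2>1\). The value \(2\) follows from your own bound \((n+x)/2\), because an unclustered word of length \(2\) has \(x=2\).
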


\begin{proof}
We first prove the lower bound.  Let \(L\in\Sigma_k^n\) satisfy
\(\runs(L)=n\), let \(F=\sort(L)\), and put \(\tau=\pi_L^{-1}\).  As in the
proof of Proposition~\ref{prop:sharp-parikh-parity}, let
\[
   s_a=\#\{i\in I_a:L_i=a\},
\]
where \(I_a\) is the block of the \(a\)'s in \(F\).  Let \(x\) be the number of
odd components of \(\Parikh(L)\), and let \(E\) be the number of odd letters
\(a\) for which \(s_a=(m_a+1)/2\).  Then
\[
   \sum_a s_a\le \sum_a\left\lfloor\frac{m_a}{2}\right\rfloor+E
   =\frac{n-x}{2}+E.
\]
By Lemma~\ref{lem:odd-block-fixed-point}, each of the \(E\) exceptional odd
blocks contributes a distinct letter-constant GRR cycle.  Therefore
\(c_0(L)\ge E\).  Using Proposition~\ref{prop:grr-hamming-correction}, we get
\[
\begin{aligned}
   R_{\GRR}(L)
   &=\dH(L,F)+c_0(L)\\
   &=n-\sum_a s_a+c_0(L)\\
   &\ge n-\left(\frac{n-x}{2}+E\right)+E
    =\frac{n+x}{2}
    \ge \left\lceil\frac n2\right\rceil .
\end{aligned}
\]
This proves the universal lower bound \(\widetilde U_k(n)\ge\lceil n/2\rceil\).
The small lengths \(n=2,3,6\) require the stronger values displayed in the
statement.  The cases \(n=2,3\) are immediate.  For \(n=6\), equality in the
lower bound would force all Parikh components to be even.  Since \(L\) is
unclustered, the only possible content is \((2,2,2)\). Indeed, all nonzero multiplicities must be even, and in an unclustered
linear word of length \(6\) no letter can occur more than \(3\) times. A direct check of the finite possibilities with one match in each of the three sorted blocks, implemented in Appendix~\ref{app:code}, shows that at least one letter-constant GRR cycle is unavoidable; hence
\(R_{\GRR}(L)\ge4\).

It remains to give matching constructions.  The small cases are realized by
\[
   0,
   \quad 01,
   \quad 010,
   \quad 1010,
   \quad 01010,
   \quad 010101
\]
for \(n=1,2,3,4,5,6\), respectively.  For \(n\ge7\), write \(n\) in one of the
following forms and choose \(L_n\) as follows:
\[
L_n=
\begin{cases}
(10)^{2q}, & n=4q,\ q\ge1,\\
(01)^{2q}0, & n=4q+1,\ q\ge1,\\
(10)^q12(10)^{q-1}20, & n=4q+2,\ q\ge2,\\
1012021(2121)^{q-1}, & n=4q+3,\ q\ge1.
\end{cases}
\]
Each word in this list is completely unclustered.  For the four families displayed above, the stable permutations are elementary
to compute. Their Hamming distances and constant-cycle corrections are:
\[
\begin{array}{c|c|c}
 n & \dH(L_n,\sort(L_n)) & c_0(L_n)\\
\hline
 4q   & 2q   & 0\\
 4q+1 & 2q   & 1\\
 4q+2 & 2q+1 & 0\\
 4q+3 & 2q+2 & 0.
\end{array}
\]
Therefore, by Proposition~\ref{prop:grr-hamming-correction},
\[
   R_{\GRR}(L_n)=\left\lceil\frac n2\right\rceil
\]
in every length \(n\ge7\).  This proves the matching upper bound and completes
the proof.
\end{proof}

Thus the GRR relaxation is completely determined; the remaining gap is exactly
the one-cycle sharpness defect.

\begin{corollary}
\label{cor:grr-gap-reduction}
For every \(k\ge3\), the GRR gap satisfies
\[
   U_k(n)-\widetilde U_k(n)=0
   \qquad(1\le n\le6),
\]
and, for every \(n\ge7\),
\[
   U_k(n)-\widetilde U_k(n)
   =
   U_k(n)-\left\lceil\frac n2\right\rceil.
\]
Consequently, in length \(n\ge7\), there exists a minimizer \(L\) for
\(\widetilde U_k(n)\) whose standard permutation is a single \(n\)-cycle if and only if the sharp
bound
\[
   U_k(n)=\left\lceil\frac n2\right\rceil
\]
holds in that length.
\end{corollary}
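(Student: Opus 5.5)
This corollary should follow by comparing Theorem~\ref{thm:tilde-U-exact} with Proposition~\ref{prop:small-values}, plus one short argument for the "consequently" clause.

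For \(1\le n\le 6\), we read off the two lists. Proposition~\ref{prop:small-values} gives \(U_k(n)=1,2,3,2,3,4\), and Theorem~\ref{thm:tilde-U-exact} gives \(\widetilde U_k(n)=1,2,3,2,3,4\). The values coincide, so the gap is \(0\). No inequality \(\widetilde U_k\le U_k\) is needed here, since both values are known exactly. For \(n\ge7\), Theorem~\ref{thm:tilde-U-exact} gives \(\widetilde U_k(n)=\lceil n/2\rceil\), and we substitute this into the difference.

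For the equivalence in length \(n\ge7\), argue in each direction.

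First suppose a minimizer \(L\) of \(\widetilde U_k(n)\) has \(\pi_L\) an \(n\)-cycle. Then \(L\) is a completely unclustered BWT image of one primitive necklace \([u]\). Because the cycle contains at least two letters (\(n\ge 2\) and \(L\) is unclustered), it is not letter-constant, so \(c_0(L)=0\). Proposition~\ref{prop:grr-hamming-correction} and Proposition~\ref{prop:hamming-reduction} then give
\[
\cruns([u])=\dH(L,\sort(L))=R_{\GRR}(L)=\widetilde U_k(n)=\lceil n/2\rceil .
\]
Hence \(U_k(n)\le\lceil n/2\rceil\), and Proposition~\ref{prop:lower-bound} gives equality.

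Conversely, suppose \(U_k(n)=\lceil n/2\rceil\). Take a primitive necklace \([u]\) attaining this value and set \(L=\BWT([u])\). Then \(\runs(L)=n\) and \(\pi_L\) is a single \(n\)-cycle. By the one-cycle case of Proposition~\ref{prop:grr-hamming-correction}, \(R_{\GRR}(L)=\cruns([u])=\lceil n/2\rceil=\widetilde U_k(n)\). So \(L\) is a minimizer of \(\widetilde U_k(n)\) whose standard permutation is one cycle.

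There is no real obstacle. The only point needing care is that a single cycle of an unclustered \(L\) with \(n\ge 2\) is never letter-constant, so the correction term \(c_0(L)\) vanishes.
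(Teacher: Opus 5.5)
Your proof is correct and follows the paper's route. The gap formulae are read off from Theorem~\ref{thm:tilde-U-exact} and Proposition~\ref{prop:small-values}, and the equivalence for \(n\ge7\) is the one-cycle interpretation of \(U_k(n)\): a one-cycle unclustered word has \(c_0(L)=0\), so \(R_{\GRR}(L)=\cruns(\BWT^{-1}(L))\). You spell out both directions of that equivalence, which the paper compresses into a single sentence.
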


\begin{proof}
The displayed formulae follow immediately from
Theorem~\ref{thm:tilde-U-exact} and the known small values in
Proposition~\ref{prop:small-values}.  The final assertion is just the one-cycle
interpretation of \(U_k(n)\): equality of the two minima means precisely that some minimum GRR word can be
chosen to be a one-cycle BWT image.
\end{proof}

The correction formula \eqref{eq:grr-hamming-correction} shows why this question
is subtler than a pure crossing-number relaxation: splitting a permutation into
several GRR cycles may reduce the number of crossing edges, but letter-constant
cycles contribute one run each.  Thus a putative strict inequality
\(\widetilde U_k(n)<U_k(n)\) must overcome this constant-cycle correction.

\section{Open problems}

\begin{problem}\label{prob:parikh-constraints}
Let \(\mathbf m=(m_0,\ldots,m_{k-1})\) and let
\(n=|\mathbf m|=\sum_a m_a\). For which \(\mathbf m\) does there exist a
primitive necklace \([u]\) over \(\Sigma_k\) with
\[
  \Parikh(u)=\mathbf m,\qquad
  \runs(\BWT([u]))=n,\qquad
  \cruns([u])=\left\lceil\frac n2\right\rceil?
\]
\end{problem}

\begin{problem}
Construct, in linear or near-linear time, words \(L_n\in\Sigma_3^n\) satisfying
the three conditions of Corollary~\ref{cor:hamming-construction-criterion}.
\end{problem}

\begin{problem}
For a fixed \(k\ge3\), for which \(n\) does there exist a minimizer \(L\) for
\(\widetilde U_k(n)\) such that \(\pi_L\) is an \(n\)-cycle?

By Corollary~\ref{cor:grr-gap-reduction}, for \(n\ge7\) this is equivalent to
Conjecture~\ref{conj:sharp}.
\end{problem}

\appendix

\section{Verification code for the small obstruction}
\label{app:code}

The following Python script exhaustively enumerates all ordered equality
patterns of length \(6\). This verifies both \(U_k(6)=4\) and
\(\widetilde U_k(6)=4\). Since every word of length \(6\) over an ordered
alphabet is order-isomorphic to one of these patterns, the verification is
independent of the alphabet size. Here two words are order-isomorphic if they induce the same equality pattern and the same relative order on the occurring letters.

\begin{verbatim}
from itertools import product

def stable_permutation(L):
    F = sorted(L)
    occ_F, pos = {}, {}
    for j, c in enumerate(F):
        r = occ_F.get(c, 0)
        pos[(c, r)] = j
        occ_F[c] = r + 1

    occ_L, pi = {}, []
    for c in L:
        r = occ_L.get(c, 0)
        pi.append(pos[(c, r)])
        occ_L[c] = r + 1
    return pi

def inverse_perm(p):
    q = [0] * len(p)
    for i, j in enumerate(p):
        q[j] = i
    return q

def cycles(p):
    seen = [False] * len(p)
    out = []
    for i in range(len(p)):
        if not seen[i]:
            C, x = [], i
            while not seen[x]:
                seen[x] = True
                C.append(x)
                x = p[x]
            out.append(C)
    return out

def is_one_cycle(p):
    return len(cycles(p)) == 1

def is_unclustered(L):
    return all(L[i] != L[i+1] for i in range(len(L)-1))

def hamming_to_sorted(L):
    F = sorted(L)
    return sum(L[i] != F[i] for i in range(len(L)))

def constant_cycle_correction(L):
    tau = inverse_perm(stable_permutation(L))
    return sum(len({L[i] for i in C}) == 1 for C in cycles(tau))

def grr_run_number(L):
    return hamming_to_sorted(L) + constant_cycle_correction(L)

def canonical_pattern(L):
    alphabet = sorted(set(L))
    relabel = {c: i for i, c in enumerate(alphabet)}
    return tuple(relabel[c] for c in L)

n = 6
patterns = sorted({
    canonical_pattern(L)
    for L in product(range(n), repeat=n)
})

best_one_cycle = None
best_grr = None
count_best_one_cycle = 0
count_best_grr = 0

for L in patterns:
    if not is_unclustered(L):
        continue

    pi = stable_permutation(L)
    h = hamming_to_sorted(L)

    if is_one_cycle(pi):
        if best_one_cycle is None or h < best_one_cycle:
            best_one_cycle = h
            count_best_one_cycle = 1
        elif h == best_one_cycle:
            count_best_one_cycle += 1

    grr = grr_run_number(L)
    if best_grr is None or grr < best_grr:
        best_grr = grr
        count_best_grr = 1
    elif grr == best_grr:
        count_best_grr += 1

print("number of ordered equality patterns:", len(patterns))
print("best one-cycle:", best_one_cycle)
print("number of best one-cycle patterns:", count_best_one_cycle)
print("best GRR:", best_grr)
print("number of best GRR patterns:", count_best_grr)

assert best_one_cycle == 4
assert best_grr == 4
\end{verbatim}

The output is:
\begin{verbatim}
number of ordered equality patterns: 4683
best one-cycle: 4
number of best one-cycle patterns: 19
best GRR: 4
number of best GRR patterns: 67
\end{verbatim}

Thus the enumeration verifies both \(U_k(6)=4\) and
\(\widetilde U_k(6)=4\).

\end{document}